\theoremstyle{plain}
\newtheorem{theorem}{Theorem}[section]
\newtheorem{proposition}[theorem]{Proposition}
\newtheorem{lemma}[theorem]{Lemma}
  \theoremstyle{remark}
\newtheorem{remark}[theorem]{Remark}
  \theoremstyle{definition}
\renewcommand{\(}{\left(}
\renewcommand{\)}{\right)}
\begin{document}
\title[Segregated solutions for some non-linear critical Schr\"odinger systems]{Segregated solutions for some  non-linear Schr\"odinger systems with critical growth}

\author{Haixia Chen}
\address[H.Chen]{
 School of Mathematics and Statistics, Central China Normal University, Wuhan 430079, P. R. China.
}
\email{hxchen@mails.ccnu.edu.cn}

\author{Angela Pistoia}
\address[A.Pistoia]{Dipartimento di Scienze di Base e Applicate per l'Ingegneria, Sapienza Universit\`a di Roma, Via Scarpa 16, 00161 Roma, Italy}
\email{angela.pistoia@uniroma1.it}

\author{Giusi Vaira}
\address[G.Vaira]
{Dipartimento di Matematica, Universit\`a degli studi di Bari ``Aldo Moro'', via Edoardo Orabona 4,70125 Bari, Italy}
\email{giusi.vaira@uniba.it}

\begin{abstract}
We find infinitely many  positive non-radial solutions for a system of Schr\"odinger equations with critical growth  in a  fully attractive or repulsive regime   in presence of an external radial trapping potential. 
\end{abstract}

\date\today
\subjclass[2010]{35B44, 35J47 (primary),  35B33 (secondary)}
\keywords{Gross-Pitaevskii systems, segregated solutions, blowing-up solutions, critical exponent}
 \thanks{H. Chen is partially supported by the NSFC grants (No.12071169) and the China Scholarship Council (No. 202006770017). The second and the third authors are partially supported by INDAM-GNAMPA funds. A. Pistoia is also partially supported by Fondi di Ateneo ``Sapienza" Universit\`a di Roma (Italy). G. Vaira is also partially supported by PRIN 2017JPCAPN003 ``Qualitative and quantitative aspects of nonlinear PDEs".\\
Data sharing not applicable to this article as no datasets were generated or analysed during the current study.}

\maketitle

    \section{Introduction}

The well-known Gross-Pitaevskii system  
\begin{equation}\label{GP}
- \iota \partial_t  \phi_i = \Delta  \phi_i -V_i(x) \phi_i+ \mu_i| \phi_i|^2 \phi_i+  \sum\limits_{j=1\atop j\not=i}^m\beta_{ij}
| \phi_j|^{2}  \phi_i ,\ i=1,\dots,m\end{equation}
 has been proposed as a mathematical model for multispecies
Bose-Einstein condensation in $m$ different  states. We refer to \cite{11,12,15,23} for a detailed physical motivation.
Here the complex valued functions $ \phi_i$'s are the wave functions of
the $i-$th condensate, $|\phi_i|$ is the amplitude of the $i$-th density, $ \mu_i$    describes  the interaction between particles of the same 
component
and $\beta_{ij}$, $i\not=j,$  describes   the interaction between particles   of different components, which can be {\em attractive} if $\beta_{ij}>0$ or {\em repulsive} if
  $\beta_{ij}<0$. 
To obtain solitary wave solutions of the Gross-Pitaevskii system \eqref{GP} one sets  $ \phi_i(t,x) = e^{- \iota \lambda_i t} u_i(x)$ and the real functions $u_i$'s solve the system
\begin{equation}\label{S}  -\Delta u_i +\lambda_i u_i +V_i(x) u_i= \mu_i u_i^3+u_i   \sum \limits_{ j=1\atop j\not=i }^m \beta_{ij} u_j^2  \ \hbox{in}\ \mathbb R^n,\ i =1,\dots,m\end{equation}
 where $\mu_i >0$, $\lambda_i>0$, $ \beta_{ij} =\beta_{ji}\in\mathbb R$, $V_i\in C^0(\mathbb R^n)$ and $n\ge2.$
\\

  We are interested in the so-called {\em vector} solutions, i.e. $u_i\not\equiv0$ for any $i=1,\dots,m.$ 
 Indeed, if one component $u_i$ identically vanishes, the system \eqref{S} is
reduced to a system with $m-1$ components.
\\

In the last decades,   the 
nonlinear Schrödinger system \eqref{S} has been widely studied.   Most of the work has been done when the spacial dimension is $n=2$ or $n=3$ (in this case the cubic non-linearity has sub-critical growth) and in absence of potentials (i.e. all the $V_i$'s are zero).   We refer the reader to a couple of   recent papers   \cite{bmw,WW} where the authors provide an exhaustive list of  references.
On the other hand the non-autonomous case is much less studied. It  has been treated firstly by Peng and Wang \cite{PW} and more recently by Pistoia and Vaira \cite{PV}.  In particular, they considered radial trapping potentials $V_i$'s and built (via a careful Ljapunov-Schmidt procedure)
unbounded sequences of non-radial positive vector
solutions   in a fully repulsive (i.e. all the $\beta_{ij}$'s are negative) or attractive (i.e. all the $\beta_{ij}$'s are negative)  regime.
\\

In dimension $n=4$ the cubic growth is   critical   and the existence of solutions to  \eqref{S} is a much more difficult issue. The autonomous case has been    studied by Clapp and Pistoia \cite{CP} and Clapp and Szulkin \cite{CS}, who  proved the existence of vector non-radial solutions to \eqref{S} in a fully repulsive regime using an interesting variational approach.
Recently, Chen, Medina and Pistoia \cite{cmp} built (via a sophisticated Ljapunov-Schmidt procedure) a new type of non-radial solutions in a weak  repulsive regime  (i.e. some $\beta_{ij}$'s are  equal, negative and small). In the present paper, we will focus on 
 the existence of solutions in the non-autonomous case which is by far quite unexplored.\\
 
 We assume that all the coupling parameters $\beta_{ij}$'s are equal to a real number $\beta$ and all the trapping potentials $V_i$'s coincide with a positive and radially symmetric function  $V\in L^2({\mathbb R}^4)\cap C^2({\mathbb R}^4)$. Therefore the system \eqref{S} reduces to the system
 \begin{equation}\label{beg}
-\Delta u_i+V(x)u_i=u_i^3+\beta \sum_{j\neq i}u_i u_j^2\text{~in~}{\mathbb R}^4, i=1,...,m.
\end{equation}
We are going to  build  infinitely many non-radial solutions to \eqref{beg}, whose  building blocks are   the so-called bubbles
  \begin{align}\label{udx}U_{\delta, \xi}(x)=\frac{1}{\delta} U\(\frac{x-\xi}{\delta}\)\ \hbox{with}\ U(x)=\frac{\mathtt c}{1+|x|^2},\ {\mathtt c}=2\sqrt{2},\end{align}
 which are the positive solutions to the critical equation
 $$-\Delta u=u^{3} \text{~in~}{\mathbb R}^4.$$
We assume that 
\begin{equation}\label{r0}
r_0>0\ \hbox{is a  non-degenerate critical point of the function}\ r\to r^2V(r).
\end{equation}
Now, we can state our main result.

\begin{theorem}\label{thm1.1}
There exists $k_0>0$ such that for any even  integer $k\geq k_0$, there exists a solution $(u_1,...,u_m)\in [\mathcal D^{1,2}({\mathbb R}^4)]^m$ to problem \eqref{beg} of the form 
$$u_q\sim \sum\limits_{i=1}^k U_{\delta,   \xi^q_i}\ \hbox{for any}\ q=1,...,m.$$
where  the bubbles $U_{\delta,  \xi^q_i}$ are defined in \eqref{udx}.\\
All the bubbles have the same  blow-up rate  which satisfies
$$\delta=e^{-d_kk^2}\ \hbox{and}\ d_k\sim \frac{\mathfrak d}{ r_0^2V(r_0)}>0\ \hbox{as}\ k\to\infty$$
for some positive constant  $\mathfrak d$.
\\
The blow-up points  $\xi_i^q$  (see \eqref{xiq}) of  the     component  $u_q$ lie on a circle  $\Gamma_q$ whose distance  from the origin approaches $r_0$ as $k\to\infty.$
Moreover,  $\Gamma_p$ with $\Gamma_q$ is   an Hopf link if   $p\not=q.$  
\end{theorem}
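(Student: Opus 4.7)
The plan is to use a finite-dimensional Lyapunov--Schmidt reduction inside a tightly constrained symmetry class. Choose, for each component $q = 1, \dots, m$, a $2$-plane $P_q \subset \mathbb{R}^4$ so that the unit circles $S^1 \cap P_q$ form pairwise Hopf links (for $m = 2$ simply take orthogonal planes; for general $m$ choose planes meeting only at $0$). Let $G \subset O(4)$ be generated by the order-$k$ rotation in each $P_q$ together with reflections stabilising the full configuration, and work inside the $G$-invariant subspace of $[\mathcal{D}^{1,2}(\mathbb{R}^4)]^m$. Place
\begin{equation*}
\xi_i^q \;=\; r\cos\!\tfrac{2\pi i}{k}\,\mathbf{e}_q \;+\; r\sin\!\tfrac{2\pi i}{k}\,\mathbf{e}'_q, \qquad i = 1,\dots,k,
\end{equation*}
with $(\mathbf{e}_q,\mathbf{e}'_q)$ an orthonormal frame of $P_q$, and set the ansatz $u_q = W_q + \phi_q$ with $W_q = \sum_{i=1}^k U_{\delta,\xi_i^q}$. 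After this symmetrisation, the only free parameters are the common concentration $\delta > 0$ and the common radius $r$ close to $r_0$.

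\textbf{Step 2: Linear theory and nonlinear solvability.}
Write the system as $\mathcal{L}_q[\phi_q] = \mathcal{N}_q(\phi) + \mathcal{E}_q$, where $\mathcal{L}_q$ is the linearisation at $W_q$, $\mathcal{E}_q := -\Delta W_q + V W_q - W_q^3 - \beta W_q \sum_{p \neq q} W_p^2$ is the ansatz error, and $\mathcal{N}_q$ collects the remaining nonlinearities. The kernel of $\mathcal{L}_q$ acting on $\mathcal{D}^{1,2}$ is classically spanned by $\partial_\delta U_{\delta,\xi_i^q}$ and $\partial_{\xi} U_{\delta,\xi_i^q}$; the $G$-invariance identifies all $k$ translational and dilation modes within one component, leaving only one dilation mode and one radial mode per component. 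Projecting onto the orthogonal complement of this finite-dimensional kernel, prove an $L^{4/3}$--$L^4$ (or weighted $L^\infty$) uniform invertibility estimate for $\mathcal{L}_q$ by the standard blow-up/contradiction argument for critical bubbles, using the non-degeneracy of the Aubin--Talenti bubble. Then solve the projected nonlinear equation by contraction mapping, obtaining $\phi = \phi(r,\delta)$ with $\|\phi\| = o(\|\mathcal{E}\|)$ uniformly for $(r,\delta)$ in an admissible range.

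\textbf{Step 3: Reduced energy and its expansion.}
The system reduces to finding critical points of $\mathcal{F}(r,\delta) := J(W + \phi(r,\delta))$. The key computation is the expansion
\begin{equation*}
\mathcal{F}(r,\delta) \;=\; \tfrac{mk}{4}S^2 \;+\; A_1\, mk\, V(r)\, \delta^2 \log\tfrac{1}{\delta} \;-\; A_2\, mk\, \delta^2 \sum_{i\neq j} \frac{1}{|\xi_i^q - \xi_j^q|^2} \;+\; R(r,\delta),
\end{equation*}
where $S$ is the Sobolev constant, $A_1, A_2 > 0$ are explicit, and the remainder $R$ collects cross-component interactions and terms of strictly smaller order. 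The potential contribution comes from the logarithmically divergent integral $\int V U_{\delta,\xi}^2 = A_1 V(\xi)\,\delta^2 \log(1/\delta) + O(\delta^2)$, characteristic of dimension $4$. The self-interaction $\sum_{i\neq j} |\xi_i^q - \xi_j^q|^{-2} = (c/r^2)\,k^2$ arises from the first non-cancelling bubble interaction $\int U_i^3 U_j \sim \delta^2/R^2$. Crucially, the Hopf-link geometry makes all pairwise distances between bubbles in distinct components equal to $r\sqrt{2}$ (in the orthogonal case), so the $\beta$-dependent cross terms contribute only $O(k^2 \delta^2/r^2)$, a lower-order correction.

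\textbf{Step 4: Critical-point analysis and main obstacle.}
Balancing the two leading terms yields $\log(1/\delta) \sim (A_2/A_1)\, k^2 /(r^2 V(r))$, i.e.\ $\delta = e^{-d_k k^2}$ with $d_k \to \mathfrak{d}/(r_0^2 V(r_0))$. At the balanced scale, $\mathcal{F}$ reduces, modulo lower-order terms, to a function proportional to $r^2 V(r)$, so the non-degenerate critical point of $r \mapsto r^2 V(r)$ at $r = r_0$ produces, via the implicit function theorem, a non-degenerate critical point $(r_\ast, \delta_\ast)$ of the full reduced energy for every even $k \geq k_0$. The \emph{main technical obstacle} is the expansion at Step~3: the slow $1/|x|^2$ decay of the bubble in the critical dimension $n = 4$ makes the first-order bubble interactions nearly cancel the Sobolev-energy term, so one must extract and control the second-order interactions with logarithmic precision while simultaneously proving that the cross-component terms (which depend on $\beta$ and on the angle between the planes $P_p, P_q$) are strictly subleading. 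This, together with the uniformity of the linear theory as $\delta \to 0$ and $k \to \infty$, is where most of the work lies.
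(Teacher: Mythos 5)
Your outline follows the classical \emph{variational} reduction (expand the reduced energy $J(W+\phi)$ and find a critical point in $(r,\delta)$), whereas the paper takes a different route: it first collapses the whole system to a single non-local scalar equation \eqref{nlo} via the ansatz $u_q(x)=u({\mathscr S}_qx)$, and then determines $(\delta,\rho)$ not from the energy but from two identities in the spirit of Peng--Wang--Yan: testing the equation against $\partial_\delta W$ and a \emph{local Pohozaev identity} $\int_{D_\varepsilon}(\cdots)\langle x,\nabla u\rangle\,dx=0$ over an annulus containing the concentration sphere. Your formal expansion is consistent with theirs (the $V\delta^2\log(1/\delta)$ term against the $\delta^2k^2/r^2$ interaction, the balance $\log(1/\delta)\sim \mathfrak d k^2/(r^2V(r))$, and the reduction of the $r$-equation to $\partial_r(r^2V(r))=0$ all match Propositions \ref{propred}--\ref{propexp}), and your observation that the cross-component terms are subleading because inter-component distances are bounded below is exactly Lemma \ref{lem11} (though your claim that these distances all equal $r\sqrt2$ is only true for $m=2$; for general $m$ they merely have a uniform positive lower bound).

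The genuine gap is in Step 4: to conclude that a critical point of the reduced energy kills the Lagrange multipliers ${\mathfrak c}_0,{\mathfrak c}_1$, you need the remainder $R(r,\delta)$ (which contains $\phi$ and hence $\partial_r\phi$, $\partial_\delta\phi$) to be small in $C^1$, to precision $o(\delta k^3)$ in $\delta$ and $o(\delta^2 k^3)$ in $r$ --- exponentially small in $k^2$. You assert the $C^0$ expansion but give no mechanism for the $C^1$ control, and this is precisely the difficulty the paper's Pohozaev-identity strategy is designed to bypass: identities \eqref{po1}--\eqref{po2} involve only $\phi$ itself (plus boundary terms on $\partial D_\varepsilon$, where $u=\phi$ thanks to the cut-off), never its parameter derivatives. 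Moreover, the Pohozaev identity for the \emph{non-local} equation produces the term $\int\beta u\sum_q u^2({\mathscr S}_qx)\langle x,\nabla u\rangle$, which is not a divergence of a local density; the paper handles it by a symmetry/averaging argument over the group $\{{\mathscr S}_q\}$ (Step 1 of Proposition \ref{propexp}), a step with no analogue in your scheme. Two further points you should repair: the linearization of the system is \emph{coupled} across components (your $\mathcal L_q[\phi_q]$ must contain $-\beta\phi_q\sum_{p\neq q}W_p^2$, and the blow-up limit at a concentration point of a \emph{different} component is $-\Delta\phi^*=\beta U^2\phi^*$, which forces the non-resonance condition $\beta\notin\{\Lambda_i\}$ used in Proposition \ref{proplin} and absent from your argument); and in dimension $4$ the invertibility estimate cannot be run in the naive $L^{4/3}$--$L^4$ duality because of the linear term $Vu$ and the slow decay $|x|^{-2}$ of the bubble with $k\to\infty$ copies --- the choice of the weighted norms $\|\cdot\|_*$, $\|\cdot\|_{**}$ is itself one of the delicate points of the paper.
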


 \begin{remark}
 The solutions we find are of segregated type in the sense that the components tend to segregated from each other leading to phase separation.
 This phenomena has been widely studied by Terracini and her collaborators in a series of papers 
(see for example \cite{CTV,29,37,38,39} and references therein) and naturally appears in the strongly repulsive case (i.e.  $\beta_{ij}\to -\infty$).
In our case the existence of this kind of solutions is not affected at all by the presence of the coupling parameter $\beta$: they do  exist in both fully repulsive (i.e. $\beta<0$) and attractive regime (i.e. $\beta>0$). This is due to the fact  that the regime of the system is entirely encrypted in  the non-local term  of \eqref{nlo} (see below) which does not appear  in the reduced problem (i.e. it is an higher order term in \eqref{ex1} and \eqref{pooo2}).  
It would be extremely interesting to exhibit examples of potential $V$ (maybe changing-sign) and/or different configurations of bubbles for which the existence of solutions of \eqref{nlo} strongly depends on  the sign of the parameter $\beta.$
\\
\end{remark}
\begin{remark}Let us describe the strategy of the proof.
First, we look for solutions having a subtle symmetry which allows to reduce the system \eqref{beg} to a single non-local equation.
We follow an idea introduced by Chen, Medina and Pistoia in \cite{cmp}.
More precisely, if
\begin{equation}\label{sq} {\mathscr S}_q:=\(\begin{matrix}\cos \frac{(q-1)\pi}m &-\sin\frac{(q-1)\pi}m &0&0\\
\sin\frac{(q-1)\pi}m &\cos\frac{(q-1)\pi}m &0&0\\
0&0&\cos\frac{(q-1)\pi}m &\sin\frac{(q-1)\pi}m \\
0&0&-\sin\frac{(q-1)\pi}m &\cos\frac{(q-1)\pi}m  \\
 \end{matrix}\right)\ \hbox{for any}\ q=1,\dots,m.\end{equation}
and  $u$ is an  even function (i.e. $u(x)=u(-x)$) which solves the non-local equation
\begin{align}\label{nlo}
-\Delta u+V(x)u=u^3+\beta u\sum_{q=2}^mu^2({\mathscr S}_qx), \text{~in~}{\mathbb R}^4,
\end{align}
 then the functions defined via \eqref{sq}
   \begin{equation}\label{uq}u_q(x)=u({\mathscr S}_qx)\ \hbox{for any}\  q=1,...,m\end{equation}
solve the system \eqref{beg}.
 
 Next, we build a solution to \eqref{nlo} whose shape resembles $k$ copies of  bubbles whose peaks are located at the vertices of a regular polygon placed in a great circles $\Gamma $ with radius $r$. 
 As a result the peaks of  the function $u_q$ which solves  the system \eqref{beg} and is defined via the symmetry \eqref{uq},   lie on the great circle $\Gamma_q={\mathscr S}_q\(\Gamma\)$ and different components concentrate along linked great circles (i.e. $\Gamma_q$ with $\Gamma_p$ is an Hopf link if $q\not=p$).
 In order to carry out the construction of the solution to \eqref{nlo} we are inspired by Peng, Wang and Yan \cite{PWY}, who build positive solutions to  the Schrödinger equation
$$-\Delta u+V(x)u=u^{n+2\over n-2}, \text{~in~}{\mathbb R}^n\ (n\ge5)
$$
combining  the classical Ljapunov-Schmidt procedure together with a clever use of local 
 Pohozaev identities   to find the algebraic equations
which determine the location of the peaks.
Here we follow the same general
approach to study the non-local Schrödinger equation \eqref{nlo}. However a substantial difference arises when we  write the local Pohozaev identities, due to the presence of the
non-local term. To overcome the problem we use in a delicate and clever way the subtle symmetries  owned by the solutions we aim to build (see Proposition \ref{propexp}).\\ \end{remark}

 \begin{remark}
If we replace  ${\mathscr S}_q$ in \eqref{sq} by
$$ {\mathscr T}_q:=\(\begin{matrix}\cos \frac{(q-1)\pi}m &-\sin\frac{(q-1)\pi}m &0&0\\
\sin\frac{(q-1)\pi}m &\cos\frac{(q-1)\pi}m &0&0\\
0&0&1&0 \\
0&0&0 &1 \\
 \end{matrix}\right)\ \hbox{for any}\ q=1,\dots,m.$$
 a result similar to Theorem \ref{thm1.1} can be proved once we choose the peaks as
$ \xi_i^q={\mathscr T}_q^{-1}\xi_i$ where $ \xi_i:= {\rho} \( \cos\frac{2\pi(i-1)} k, \sin\frac{2\pi(i-1)} k, 0, 0\).$
In this case we can also relax the assumption on the potential $V$ only requiring that it is radially symmetric in the first two variables in the spirit of \cite{PWY}. \\
 \end{remark}

 \begin{remark}
It is also worthwhile to point out that we  introduce the good weighted spaces where   the reduction procedure in dimension $n=4$  can be carried out
(as far as we know only the case $n\ge5$ has been treated in the literature). 
This choice is a delicate issue, since  as it is usual we can not use the standard Sobolev spaces because of  the arbitrary large number of bubbles in the solution
and
the presence of the linear term in  the critical equation \eqref{nlo}  in 4D is not an innocent matter due
  the slow decay of the bubbles. 
   \\
   \end{remark}

 {\em Notation.} In what follows we agree that $f\lesssim g$ or $f=\mathcal O(g)$ means $|f|\le c |g| $  for some positive constant $c$ independent of $k$ and $f\sim g$  means $f= g+o(g)$.

\section{Proof of Theorem \ref{thm1.1}}\label{sec2}
\subsection{Rewriting the non-local equation via the finite dimentional reduction method}

We will find solutions of \eqref{nlo} in the space of symmetric functions
\begin{align*} X:=\{u\in \mathcal D^{1,2}({\mathbb R}^4): u \text{~satisfies~}  \eqref{sy22},\ \eqref{sy33}\ \hbox{and}\ \eqref{sy55}\},\end{align*}
i.e.
  \begin{align}
&u(x_1, x_2, x_3, x_4)=u(x_1, -x_2, x_3, -x_4),\label{sy22}\\
&\label{sy33}u(x_1, x_2,x_3,x_4)=u(x_3,x_4,x_1,x_2)\\
&\label{sy55}u(x)=u({\mathscr R}_ix)\ \hbox{for any}\ i=1,...,k,\end{align}
 where
  $k$  is an even integer and 
  $$
{{\mathscr R}}_i=\left(\begin{matrix}\cos\frac{2\pi(i-1)}{k}&\sin\frac{2\pi(i-1)}{k}&0&0\\
-\sin\frac{2\pi(i-1)}{k}&\cos\frac{2\pi(i-1)}{k}&0&0\\
0&0&\cos\frac{2\pi(i-1)}{k}&\sin\frac{2\pi(i-1)}{k}\\
0&0&-\sin\frac{2\pi(i-1)}{k}&\cos\frac{2\pi(i-1)}{k} \\
 \end{matrix}\).$$

In particular, we are going to build a solution to \eqref{nlo} as
\begin{align}\label{u11}u=\underbrace{\sum_{i=1}^k \chi U_{\delta, \xi_i}}_{:=W}+\phi\ \hbox{as}\ k\to+\infty,\end{align}
where the  bubbles $U_{\delta, \xi_i}$ are defined in  \eqref{udx} whose blow-up points are
\begin{align*}
 \xi_i:= \frac{{\rho}}{\sqrt2}\( \cos\frac{2\pi(i-1)} k, \sin\frac{2\pi(i-1)} k, \cos\frac{2\pi(i-1)} k, \sin\frac{2\pi(i-1)} k\),\ \hbox{with}\ |\rho-r_0|\le \vartheta \end{align*}
for some $\vartheta>0$ small and  blow-up rate satisfies $\delta=  e^{-dk^{2}}$ with $d\in [d_0, d_1]$ for some $d_1> d_0>0$. Here $r_0$ is given in \eqref{r0}.
Moreover, $\chi(x)=\chi(|x|)$  is a  radial   cut-off function whose support is close to the sphere $\{|x| =r_0\}$, namely 
  \begin{equation}\label{zet}\chi=1 \text{~in~}|r-r_0|\leq \sigma\ \hbox{and}\ \chi=0\text{~in~}|r-r_0|>2\sigma\ \hbox{for some $\sigma>0$ small.}\end{equation}
  Finally, as usual, $\phi$ is an higher order term.
  
  It is worthwhile to point out that the function $u_q$ given in \eqref{uq} 
  blows-up at the points
\begin{align}\label{xiq}\xi_i^q:={\mathscr S}_q^{-1}\xi_i=\frac{{\rho}}{\sqrt2} &\bigg(\cos\(-\frac{(q-1)\pi}m+ \frac{2\pi(i-1)} k\) ,\sin\(-\frac{(q-1)\pi}m+ \frac{2\pi(i-1)} k\),\nonumber\\
&\cos\(\frac{(q-1)\pi}m+ \frac{2\pi(i-1)} k\) ,\sin\(\frac{(q-1)\pi}m+ \frac{2\pi(i-1)} k\)\bigg). \end{align}
Plugging $u=W+\phi$ into  the non-local equation \eqref{nlo}, it can be rewritten as
\begin{align}\label{len}
\mathcal L(\phi)=\mathcal E+\mathcal N(\phi), \text{~in~}{\mathbb R}^4\end{align}
where $\mathcal L(\phi), \mathcal E, \mathcal N(\phi)$ are defined as
\begin{align}
&\mathcal L(\phi):=-\Delta\phi+V(x)\phi-3W^2\phi-\beta\phi \sum_{q=2}^mW^2({\mathscr S}_qx), \label{l}\\
&\mathcal E:=W^3+\Delta W-V(x)W+\beta W\sum_{q=2}^mW^2({\mathscr S}_qx), \label{e}\\
&\mathcal N(\phi):=\phi^3+3W\phi^2+\beta\phi\sum_{q=2}^m\phi^2({\mathscr S}_qx)+2\beta \phi\sum_{q=2}^mW({\mathscr S}_qx)\phi({\mathscr S}_qx)\nonumber\\
&~~~~~~~~~~~~~~~~~~~~~~~\ \ \ \ \ \ \ \ \ \ +\beta W \sum_{q=2}^m \phi^2({\mathscr S}_qx)+2\beta W \sum_{q=2}^mW({\mathscr S}_qx)\phi({\mathscr S}_qx).\label{n}
\end{align}
As it is usual, to solve \eqref{len} we will follow the classical steps of the Ljapunov-Schmidt procedure:
\begin{itemize}
\item[(i)] we show there  exists   $\phi\in X$ solution to the problem
\begin{equation}\label{exi}\left\{\begin{aligned}
&\mathcal L(\phi)=\mathcal E+\mathcal N(\phi)+\sum_{l=0}^1{\mathfrak c}_l(\delta, \rho)\sum_{i=1}^k(\chi U_{\delta, \xi_i})^2Z_{\delta,\xi_i}^l \\
& \int_{\mathbb R^4}\sum_{i=1}^k(\chi U_{\delta, \xi_i})^2Z_{\delta,\xi_i}^l\phi \ dx=0, l=0, 1\end{aligned}\right.\end{equation}
where $$Z^0_{\delta, \xi_i}=\frac{\partial (\chi U_{\delta, \xi_i})}{\partial \delta}\ \hbox{and}\ Z^1_{\delta, \xi_i}=\frac{\partial (\chi U_{\delta, \xi_i})}{\partial \rho} 
$$
\item[(ii)] we find $\delta>0$ and $\rho$ close to $r_0$ such that   ${\mathfrak c}_0(\delta, \rho)={\mathfrak c}_1(\delta, \rho)=0$.
\end{itemize} 
\subsection{Preliminaries}
Let us introduce the norms
\begin{align*} \|\phi\|_*=\sup\limits_{x\in\mathbb R^4} \bigg( \sum_{q=1}^m\sum\limits_{i=1}^k\frac{1}{ \delta+|x-{\mathscr S}_q^{-1}\xi_i|} \bigg)^{-1}|\phi(x)|,\end{align*}
\begin{align*}\|h\|_{**}=\sup\limits_{x\in\mathbb R^4} \bigg( \sum_{q=1}^m\sum\limits_{i=1}^k\frac{1}{(\delta+|x-{\mathscr S}_q^{-1}\xi_i|)^{3}} \bigg)^{-1}|h(x)|.\end{align*}

 The estimates of the norm of varies quantities rely on a couple of  important results whose proofs can be found in Appendix B of \cite{WY}.
\begin{lemma}\label{app1} For any $0<\alpha\leq\min\{\alpha_1, \alpha_2\}, i\neq j,$ it holds
$$\begin{aligned}&\frac{1}{(1+|x-x_i|)^{\alpha_1}}\frac{1}{(1+|x-x_j|)^{\alpha_2}}\\
&~~~~~\lesssim \frac{1}{|x_i-x_j|^\alpha}\(\frac{1}{(1+|x-x_i|)^{\alpha_1+\alpha_2-\alpha}}+\frac{1}{(1+|x-x_j|)^{\alpha_1+\alpha_2-\alpha}}\)\end{aligned}$$
\end{lemma}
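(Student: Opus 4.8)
The plan is to prove this purely elementary weighted interaction inequality by a direct case analysis; no analysis is needed, only the triangle inequality and monotonicity of power functions. The guiding idea is to split $\mathbb R^4$ according to whether the point $x$ lies closer to $x_i$ or to $x_j$, measured against the separation $|x_i-x_j|$, and to observe that on each piece the denominator of the ``far'' factor dominates a power of $|x_i-x_j|$ — and that power is precisely the gain $|x_i-x_j|^{-\alpha}$ claimed on the right-hand side.

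Concretely, I would set $R:=|x_i-x_j|>0$ and first work on $A:=\{x:\ |x-x_i|\ge R/2\}$. There $1+|x-x_i|\ge|x-x_i|\ge R/2$, so one can peel a factor $(1+|x-x_i|)^{-\alpha}\le 2^\alpha R^{-\alpha}$ off of $(1+|x-x_i|)^{-\alpha_1}$ — this is legitimate because $\alpha_1-\alpha\ge0$ by hypothesis — which leaves $R^{-\alpha}(1+|x-x_i|)^{-(\alpha_1-\alpha)}(1+|x-x_j|)^{-\alpha_2}$ to be recognized as one of the two terms on the right. To do so I would distinguish the two sub-cases $|x-x_j|\le|x-x_i|$ and $|x-x_j|>|x-x_i|$: in the first, replace $(1+|x-x_i|)^{-(\alpha_1-\alpha)}$ by the larger $(1+|x-x_j|)^{-(\alpha_1-\alpha)}$ (again using $\alpha_1\ge\alpha$), collapsing the product to $(1+|x-x_j|)^{-(\alpha_1+\alpha_2-\alpha)}$; in the second, replace $(1+|x-x_j|)^{-\alpha_2}$ by $(1+|x-x_i|)^{-\alpha_2}$, collapsing it to $(1+|x-x_i|)^{-(\alpha_1+\alpha_2-\alpha)}$. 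In either case the left side is bounded on $A$ by $2^\alpha R^{-\alpha}$ times the sum in the statement.

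On the complement $B:=\{x:\ |x-x_i|<R/2\}$ the triangle inequality forces $|x-x_j|\ge R-|x-x_i|>R/2$, and the symmetric argument — peeling $(1+|x-x_j|)^{-\alpha}$ off of $(1+|x-x_j|)^{-\alpha_2}$, which now uses $\alpha_2-\alpha\ge0$ — runs verbatim with the roles of $(x_i,\alpha_1)$ and $(x_j,\alpha_2)$ exchanged. Since $A\cup B=\mathbb R^4$, adding the two estimates yields the claim with an implicit constant depending only on $\alpha,\alpha_1,\alpha_2$. I do not expect any genuine obstacle: the entire content is the bookkeeping of the four sub-cases, and the hypothesis $0<\alpha\le\min\{\alpha_1,\alpha_2\}$ enters solely to keep every exponent nonnegative so that all the monotonicity comparisons are valid. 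In the final write-up I would simply record this as in Appendix B of \cite{WY}.
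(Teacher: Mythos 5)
Your argument is correct and complete: the split according to whether $|x-x_i|\ge \tfrac12|x_i-x_j|$ or not, followed by peeling off the factor $(1+|x-\cdot|)^{-\alpha}\le 2^{\alpha}|x_i-x_j|^{-\alpha}$ from the far factor and comparing the remaining exponents, is exactly the standard proof of this interaction estimate (the paper itself gives no proof but defers to Appendix~B of \cite{WY}, where the same case analysis appears). Nothing is missing; the constant $2^{\alpha}$ depends only on the exponents, as required.
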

\begin{lemma}\label{app2}For any constant $0<\alpha<2$, there is a constant $C>0$, such that
$$
\int_{\mathbb{R}^{4}} \frac{1}{|z-x|^{2}} \frac{1}{(1+|z|)^{2+\alpha}} d z \leqslant \frac{C}{(1+|x|)^{\alpha}}.
$$
\end{lemma}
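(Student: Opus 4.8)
The plan is to prove the pointwise bound by a region decomposition of $\mathbb R^4$ adapted to the two features of the integrand: the singularity of $|z-x|^{-2}$ at $z=x$, and the concentration of the weight $(1+|z|)^{-2-\alpha}$ near the origin. Set $r:=|x|$. On the bounded set $\{r\le 1\}$ the quantity $(1+|x|)^{-\alpha}$ is bounded above and below by positive constants, so there it suffices to show the integral is $\le C$ for a fixed $C$; on $\{r\ge 1\}$ one must show the integral is $\le C r^{-\alpha}$, and this is where the decay in $x$ has to be produced. I treat the two cases separately.

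For $r\ge 1$ I would write $\mathbb R^4=A_1\cup A_2\cup A_3$ with $A_1=\{|z|\le r/2\}$, $A_2=\{|z-x|\le r/2\}$ and $A_3=\{|z|>r/2,\ |z-x|>r/2\}$. On $A_1$ one has $|z-x|\ge r/2$, so $|z-x|^{-2}\lesssim r^{-2}$, while $\int_{A_1}(1+|z|)^{-2-\alpha}\,dz\lesssim\int_0^{r/2}(1+t)^{1-\alpha}\,dt\lesssim r^{2-\alpha}$ — here the hypothesis $\alpha<2$ enters — so this piece is $\lesssim r^{-\alpha}$. On $A_2$ one has $|z|\ge r/2$, so the weight is $\lesssim r^{-2-\alpha}$, while $\int_{A_2}|z-x|^{-2}\,dz=\int_{|w|\le r/2}|w|^{-2}\,dw\lesssim r^{2}$, again giving $\lesssim r^{-\alpha}$. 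The region $A_3$ is the only one requiring a little thought, because $(1+|z|)^{-2-\alpha}$ is not integrable over all of $\mathbb R^4$ when $\alpha<2$, so one cannot simply pull out a factor $r^{-\alpha}$: instead one must exploit the decay of $|z-x|^{-2}$ at infinity. The key observation is that on $A_3$ one has $|z-x|\gtrsim |z|$ — if $|z|\le 2r$ then $|z-x|>r/2\ge |z|/4$, and if $|z|>2|x|$ then $|z-x|\ge |z|-|x|\ge |z|/2$ — and since also $|z|>r/2\ge 1/2$ on $A_3$, this upgrades to $|z-x|\gtrsim 1+|z|$. Hence $|z-x|^{-2}(1+|z|)^{-2-\alpha}\lesssim (1+|z|)^{-4-\alpha}$ and $\int_{A_3}\lesssim\int_{r/2}^{\infty}t^{3}(1+t)^{-4-\alpha}\,dt\lesssim\int_{r/2}^{\infty}(1+t)^{-1-\alpha}\,dt\lesssim r^{-\alpha}$, where $\alpha>0$ is used. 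Adding the three contributions proves the bound for $r\ge 1$.

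For $r\le 1$ I would instead split $\mathbb R^4=\{|z-x|\le 1\}\cup\{|z-x|>1\}$: on the first set the weight is bounded and $\int_{|w|\le 1}|w|^{-2}\,dw<\infty$; on the second $|z-x|^{-2}\le 1$, and the remaining integral converges since for $|z|\ge 2$ one again has $|z-x|\gtrsim |z|$, so the tail is controlled by $\int_{2}^{\infty}(1+t)^{-1-\alpha}\,dt<\infty$ (here $\alpha>0$ again). No genuine obstacle arises in this lemma; the only points worth isolating are the exponent bookkeeping in region $A_1$, which forces $\alpha<2$, and in region $A_3$ together with the $r\le 1$ tail, which force $\alpha>0$, so that the argument pins down exactly the admissible range $0<\alpha<2$.
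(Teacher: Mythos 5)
Your proof is correct, and it is essentially the standard region-decomposition argument that the paper does not reproduce but instead defers to Appendix B of the cited reference [WY]; the exponent bookkeeping in each region (in particular where $\alpha<2$ and $\alpha>0$ are used) is accurate for $\mathbb{R}^4$. No gaps.
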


Finally, we also state the following result which will be  used  throughout the paper.
\begin{lemma}\label{lem11}
There exists a positive constant $c, \zeta, C_\alpha$ such that
\begin{equation}\label{dis}|\xi_i^p-\xi^q_j|\left\{\begin{aligned} &\geq c>0 \ \ \text{~if~}p\neq q,\\&\geq \frac{\zeta}{k} \ \ \text{~if~}p=q \hbox{~and~}i\neq j.\end{aligned}\right.\end{equation}
and \begin{equation}\label{tau}
 \sum_{(q, i)\neq(p, j)}\frac{1}{|\xi_j^p-\xi_i^q|^{\alpha}}\left\{\begin{aligned}& \sim C_\alpha\frac{k^\alpha}{\rho^\alpha}\ \hbox{if}\ \alpha>1,\\
&\lesssim k\ln k\ \hbox{if}\ \alpha=1,\\
&\lesssim  k^{1+\alpha}\ \hbox{if}\ \alpha< 1.\end{aligned}\right.
\end{equation}

\end{lemma}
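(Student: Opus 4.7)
The plan is to reduce everything to a single closed-form expression for the distance $|\xi_i^q-\xi_j^p|$ and then apply elementary trigonometric estimates. First I would observe that, writing each $\xi_i^q$ from \eqref{xiq} as a pair of points on circles of radius $\rho/\sqrt{2}$ in the two coordinate planes, the identity $\cos(A-B)+\cos(A+B)=2\cos A\cos B$ applied with $A=2\pi(i-j)/k$ and $B=(q-p)\pi/m$ gives
\[
|\xi_i^q-\xi_j^p|^2 = 2\rho^2\Bigl[\,1-\cos\bigl(\tfrac{2\pi(i-j)}{k}\bigr)\cos\bigl(\tfrac{(q-p)\pi}{m}\bigr)\,\Bigr].
\]

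Both estimates in \eqref{dis} can be read off from this identity. If $p\neq q$, then $|q-p|\in\{1,\dots,m-1\}$, so $\cos((q-p)\pi/m)\leq\cos(\pi/m)<1$ and $|\xi_i^q-\xi_j^p|^2\geq 2\rho^2(1-\cos(\pi/m))\geq c^2>0$ uniformly in $k$ since $\rho$ stays close to $r_0$. If $p=q$ and $i\neq j$, the formula collapses to $|\xi_i^q-\xi_j^q|=2\rho\bigl|\sin(\pi(i-j)/k)\bigr|$, and Jordan's inequality $|\sin x|\geq 2|x|/\pi$, applied after reducing $i-j$ mod $k$ to $|i-j|\leq k/2$, yields $|\xi_i^q-\xi_j^q|\geq\zeta/k$.

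For \eqref{tau}, the symmetries of the configuration make the sum independent of the fixed reference index, so I would fix $(p,j)=(1,1)$ and split
\[
\sum_{(q,i)\neq(1,1)}\frac{1}{|\xi_1^1-\xi_i^q|^\alpha} = \sum_{i=2}^k\frac{1}{|\xi_1^1-\xi_i^1|^\alpha} + \sum_{q=2}^m\sum_{i=1}^k\frac{1}{|\xi_1^1-\xi_i^q|^\alpha}.
\]
The second double sum is trivially $O(k)$ by the uniform lower bound from the first part. In the first sum, using $|\xi_1^1-\xi_i^1|=2\rho\bigl|\sin(\pi(i-1)/k)\bigr|$, the symmetry $i\leftrightarrow k-i+2$ of the sine, and $\sin(\pi l/k)\geq 2l/k$ on $[0,\pi/2]$, one reduces to
\[
\sum_{i=2}^k\frac{1}{|\xi_1^1-\xi_i^1|^\alpha} \lesssim \frac{k^\alpha}{\rho^\alpha}\sum_{l=1}^{\lfloor k/2\rfloor}\frac{1}{l^\alpha},
\]
and the three cases of \eqref{tau} follow from the classical asymptotics of $\sum_{l=1}^N l^{-\alpha}$ (convergent, logarithmic, or of order $N^{1-\alpha}$ depending on whether $\alpha>1$, $\alpha=1$, or $\alpha<1$). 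To extract the sharp constant $C_\alpha$ when $\alpha>1$ I would replace the lower bound $\sin(\pi l/k)\geq 2l/k$ by the pointwise equivalence $\sin(\pi l/k)\sim \pi l/k$ and pass to the limit via dominated convergence, producing $C_\alpha$ in terms of $\zeta(\alpha)$.

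There is no substantive obstacle: the entire lemma reduces to the single distance identity above together with the elementary asymptotics of $p$-series. The only mild point worth noting is that the stated bound $k^{1+\alpha}$ for $\alpha<1$ is looser than what the computation actually yields (namely $O(k)$), but as $k\leq k^{1+\alpha}$ the form given is valid and is presumably stated in this uniform way for convenience in later applications.
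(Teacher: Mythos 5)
Your proposal is correct and follows essentially the same route as the paper: the same closed-form identity $|\xi_i^q-\xi_j^p|^2=2\rho^2\bigl(1-\cos\frac{2\pi(i-j)}{k}\cos\frac{(q-p)\pi}{m}\bigr)$, the same split of the sum into the same-circle part (estimated via $2\rho|\sin(\pi(i-j)/k)|$ and the $p$-series asymptotics) and the cross-circle part (which is $O(k)$ by the uniform lower bound). Your side remark that the $\alpha<1$ case actually yields $O(k)$ rather than just $O(k^{1+\alpha})$ is accurate; the paper simply states the weaker bound.
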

\begin{proof}
It is enough to remark that $$ |\xi_i^p-\xi^q_j|^2=2{\rho}^2\(1-\cos\frac{(p-q)\pi}m\cos\frac{(i-j)2\pi}k\)$$
and $$\begin{aligned} \sum_{(q, i)\neq(p, j)}\frac{1}{|\xi_j^p-\xi_i^q|^{\alpha}}&= \sum_q\sum_{i\not=j}\frac{1}{|\xi_j^q-\xi_i^q|^{\alpha}}+
\sum_{p\not=q}\sum_{i,j}\frac{1}{|\xi_j^p-\xi_i^q|^{\alpha}}\\
&= \sum_q\sum_{i\not=j}\frac{1}{|\xi_j^q-\xi_i^q|^{\alpha}}+\mathcal O(k)\\
&
\sim A_\alpha \frac{k^{\alpha}}{\rho^\alpha}\sum_{i=2}^k\frac{1}{i^\alpha}\end{aligned} $$
for some constant $A_\alpha$ depending on $\alpha.$

 \end{proof}
 
In the following we set $\eta^q_j=\frac{\xi^q_j}{\delta}$ and $\eta_i=\frac{\xi_i}{\delta}$ if $ i,j=1,...k$ and $q=1,...,m.$

\subsection{The size of the error term
 $\mathcal E$} 
\begin{proposition}\label{properr}Let $\mathcal E$ be defined as in \eqref{e}. Then
$$\|\mathcal E\|_{**} \lesssim  \delta .$$
\end{proposition}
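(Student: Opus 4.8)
The plan is to estimate $\mathcal E = W^3 + \Delta W - V(x) W + \beta W \sum_{q=2}^m W^2(\mathscr S_q x)$ pointwise, weighted by the $\|\cdot\|_{**}$-norm, by splitting the domain of $x$ according to which bubble $U_{\delta,\xi_i}$ (if any) is dominant. First I would write $\Delta W = \sum_{i=1}^k \Delta(\chi U_{\delta,\xi_i})$ and use that each $U_{\delta,\xi_i}$ exactly solves $-\Delta U = U^3$; the cut-off $\chi$ contributes commutator terms $U_{\delta,\xi_i}\Delta\chi + 2\nabla\chi\cdot\nabla U_{\delta,\xi_i}$, which are supported in the annulus $\sigma \le |r-r_0|\le 2\sigma$ where $U_{\delta,\xi_i}$ and all its derivatives are of size $\mathcal O(\delta)$ (by the decay $U_{\delta,\xi}(x)\sim \delta/|x-\xi|^2$ and $\xi_i$ close to the sphere $|x|=r_0$), so these terms are harmless: their $\|\cdot\|_{**}$-norm is $\mathcal O(\delta)$ and even much smaller. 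This reduces the leading part of $\mathcal E$ to the genuine nonlinear/potential error $W^3 - \sum_i (\chi U_{\delta,\xi_i})^3 - V(x) W + (\text{cross terms in } W^3) + \beta W \sum_{q\ge 2} W^2(\mathscr S_q x)$.

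Next I would treat the three remaining pieces. (a) The \emph{interaction term} $W^3 - \sum_i(\chi U_{\delta,\xi_i})^3$: expanding the cube, this is a sum of products $U_{\delta,\xi_i}^a U_{\delta,\xi_j}^b$ with $i\neq j$, $a+b=3$, $a,b\ge 1$; using Lemma \ref{app1} with a suitable $\alpha$ and the separation estimate $|\xi_i-\xi_j|\gtrsim \rho/k$ from Lemma \ref{lem11}, together with the summation bound \eqref{tau}, each such term is controlled in $\|\cdot\|_{**}$ by a negative power of $k$ times $\delta$-dependent factors — and since $\delta = e^{-dk^2}$ is super-exponentially small, everything here is $o(\delta)$, hence $\lesssim \delta$. (b) The \emph{potential term} $V(x)W$: on the support of $\chi U_{\delta,\xi_i}$ we have $|x|\sim r_0$ and $V(x) = \mathcal O(1)$, while $W(x)\lesssim \sum_{q,i}\delta/|x-\mathscr S_q^{-1}\xi_i|^2$; thus $|V(x)W(x)| \lesssim \delta \sum_{q,i} 1/(\delta+|x-\mathscr S_q^{-1}\xi_i|)^2$, and comparing this with the weight $\sum_{q,i} 1/(\delta+|x-\mathscr S_q^{-1}\xi_i|)^3$ defining $\|\cdot\|_{**}$ gives the bound $\delta\cdot\sup(\delta+|x-\xi|)$, which near each peak is $\lesssim \delta$. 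This is precisely the term forcing the rate $\|\mathcal E\|_{**}\lesssim\delta$ and no better — it is the dominant contribution. (c) The \emph{coupling term} $\beta W\sum_{q=2}^m W^2(\mathscr S_q x)$: here $W^2(\mathscr S_q x)$ is concentrated along the rotated circle $\Gamma_q$, which by \eqref{dis} is at distance $\gtrsim c>0$ from $\Gamma_1$; so on the support of $\chi U_{\delta,\xi_i}$ the factor $W^2(\mathscr S_q x)$ is $\mathcal O(\delta^2)$, and the whole term is $\mathcal O(\delta^3)$ times the weight, hence $o(\delta)$.

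Assembling (a)–(c) and the cut-off commutators gives $\|\mathcal E\|_{**}\lesssim \delta$. The main obstacle I anticipate is the bookkeeping for the interaction term (a): one must choose the exponent $\alpha$ in Lemma \ref{app1} carefully so that, after invoking \eqref{tau}, the $k$-dependent prefactor stays bounded (or at worst grows polynomially in $k$) while the $\delta$-powers coming from the bubbles compensate — and one has to check that the resulting bound, when measured against the $(\delta+|x-\xi|)^{-3}$ weight, does not degrade below $\delta$. Given $\delta = e^{-dk^2}$, any genuine product of two distinct bubbles carries at least one extra power of $\delta$ beyond what a single bubble cubed would, so these terms are automatically $o(\delta)$; the delicate point is simply making the weighted-norm comparison uniform in $x$ across the transition regions between neighbouring peaks, which is exactly what Lemmas \ref{app1}–\ref{app2} are designed to handle.
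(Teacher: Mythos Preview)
Your approach is essentially the paper's: the same decomposition into cut-off commutators, interaction cross-terms, the potential term, and the coupling term, with Lemma~\ref{app1} and \eqref{tau} doing the work on the cross-terms and the potential term $V(x)W$ correctly identified as the one forcing the rate $\delta$.

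One point needs tightening. In (c) you assert that ``on the support of $\chi U_{\delta,\xi_i}$ the factor $W^2(\mathscr S_q x)$ is $\mathcal O(\delta^2)$''. But the support of $\chi U_{\delta,\xi_i}$ is the full annulus $\{|r-r_0|\le 2\sigma\}$, and this annulus \emph{contains} the peaks $\xi_j^q=\mathscr S_q^{-1}\xi_j$ of $W(\mathscr S_q\,\cdot)$; near such a point $W^2(\mathscr S_q x)$ is of order $\delta^{-2}$, not $\delta^2$. The paper avoids this by applying Lemma~\ref{app1} directly to the products $U_{\delta,\xi_i}(x)\,U_{\delta,\xi_j^q}^2(x)$ with the separation $|\xi_i-\xi_j^q|\ge c$ from \eqref{dis}, obtaining the uniform pointwise bound $|\mathcal E_3|\lesssim \delta^{\alpha}k^2\sum_{q,i}(\delta+|x-\xi_i^q|)^{-3}$ for $1<\alpha\le 2$, valid for all $x$ at once. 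Your geometric reasoning is the right intuition but does not by itself close the estimate away from $\Gamma_1$. A smaller slip: in (b), $W$ carries only the $q=1$ peaks, so the sum should run over $i$ alone; the comparison with the $\|\cdot\|_{**}$-weight then works because $\chi$ has compact support (hence $\delta+|x-\xi_i|$ is bounded there), which is the actual reason rather than being ``near each peak''.
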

\begin{proof}
We have
$$\mathcal E=\underbrace{W^3+\Delta W}_{=\mathcal E_1}-\underbrace{V(x)W}_{=\mathcal E_2}+\underbrace{\beta W\sum\limits_{q=2}^mW^2({\mathscr S}_q(x))}_{=\mathcal E_3}$$
\\
First  
$$ \mathcal E_1:=W^3+\Delta W=
\underbrace{\(\sum\limits_{i=1}^k\chi U_{\delta, \xi_i}\)^3-\chi\sum_{i=1}^k U_{\delta, \xi}^3}_{=I_1}+\underbrace{\Delta \chi \sum_{i=1}^kU_{\delta, \xi_i}+2\nabla \chi\sum_{i=1}^k\nabla U_{\delta, \xi_i}}_{=I_2}.$$
Now,
$$\begin{aligned}|I_1|\lesssim  \(\sum_{j\neq i}U_{\delta, \xi_i}^2U_{\delta, \xi_j}+(\chi^3-\chi)\sum_{i=1}^k U_{\delta, \xi_i}^3\),\end{aligned}$$
with $$(\chi^3-\chi)\sum_{i=1}^k U_{\delta, \xi_i}^3 \lesssim \delta^3\sum_{i=1}^k\frac{1}{(\delta+|x-\xi_i|)^3}$$
and for any $x\in\mathbb R^4$, denote $\omega=\frac{x}{\delta}$,   by Lemma \ref{app1} and \eqref{tau}
\begin{align*}
U_{\delta, \xi_1}^2\sum_{j\neq 1}U_{\delta, \xi_j}=& \frac{1}{\delta^3}\frac{1}{(1+|\omega-\eta_1|^2)^2}\sum_{j\neq1}\frac{1}{1+|\omega-\eta_j|^2}\\
\lesssim &\frac{1}{\delta^3}\sum_{j\neq 1}\frac{1}{|\eta_1-\eta_j|^\alpha}\bigg(\frac{1}{(1+|\omega-\eta_1|)^{6-\alpha}}+\frac{1}{1+|\omega-\eta_j|^{6-\alpha}}\bigg)\\
\lesssim &\delta^\alpha k^{1+\alpha}\sum_{j=1}^k\frac{1}{(\delta+|x-\xi_j|)^3}\text{~for any~}1<\alpha\leq 2,
\end{align*}
and so using the rotation transform \begin{align*}\sum_{i=1}^k\sum_{j\neq i}U_{\delta, \xi_i}^2U_{\delta, \xi_j}\lesssim k\cdot\delta^\alpha k^{1+\alpha}\sum_{j=1}^k\frac{1}{(\delta+|x-\xi_j|)^3},\end{align*}
i.e.,\begin{align*}|I_1|\leq \sum_{i=1}^k\frac{1}{(\delta+|x-\xi_i|)^3}\delta^\alpha k^{2+\alpha}\text{~for any~}1<\alpha\leq2.\end{align*}
Moreover, $$\begin{aligned}|I_2|&\lesssim  \bigg[\delta\sum_{i=1}^k\frac{1}{(\delta+|x-\xi_i|)^3} \cdot (\delta+|x-\xi_i|)|\Delta\chi|\\
&\ \ \ +\delta\sum_{i=1}^k\frac{1}{(\delta+|x-\xi_i|)^3}\cdot \frac{|x-\xi_i|}{\delta+|x-\xi_i|}|\nabla\chi|\bigg]\\
&\lesssim \delta\sum_{i=1}^k\frac{1}{(\delta+|x-\xi_i|)^3}.\end{aligned}$$
Therefore $$\|\mathcal E_1\|_*\leq\|I_1\|_*+\|I_2\|_*\lesssim   \delta.$$

Next \begin{align*}
|\mathcal E_2|\lesssim   \sum_{i=1}^k\frac{\delta}{\delta^2+|x-\xi_i|^2}  |\chi|\lesssim  \delta\sum_{i=1}^k\frac{1}{(\delta+|x-\xi_i|)^3}.\end{align*}
Finally  by  \eqref{dis} we get
$$|\mathcal E_3|\lesssim  \sum_{q=1}^m\sum_{i=1}^k\frac{1}{(\delta+|x-\xi_i^q|)^3}\delta^\alpha k^{2} \text{~for any~}1<\alpha\leq 2.$$
Collecting all the previous estimates, it follows
$$\|\mathcal E\|_*\leq \|\mathcal E_1\|_*+\|\mathcal E_2\|_*+\|\mathcal E_3\|_*\lesssim  \delta.$$
\end{proof}
\subsection{Solving a linear problem}
Denote  by $\(\Lambda_i\)_{i\in\mathbb N}$ the sequences of eigenvalues of the problem
\begin{align*}-\Delta Z=\Lambda_i U^2Z\ \text{~in~}{\mathbb R}^4,\ Z\in\mathcal D^{1,2}({\mathbb R}^4).\end{align*}
It is well known that the first eigenvalue is $ \Lambda_1=1$ and the associated  eigenspace is generated by the function $U.$ The second eigenvalue is $\Lambda_2=3$ and the associated eigenspace is generated by the function 
$$Z_0(x)={1-|x|^2\over (1+|x|^2)^2},\ Z_1(x)={x_1\over (1+|x|^2)^2},\ \dots,\ Z_4(x)={x_4\over (1+|x|^2)^2}.$$

\begin{proposition}\label{proplin}
Let  $\mathcal L(\phi)$ be  defined as in \eqref{l}. Assume $\beta\not\in \(\Lambda_i\)_{i\in\mathbb N}$. There exist $k_0>0$ and a constant $C>0$ independent of $k$ such that for any even $k\geq k_0$, $\delta=e^{-dk^2}$ with $d\in [d_0, d_1]$ for some $d_1> d_0>0$, $\rho\in(r_0-\vartheta,r_0+\vartheta)$ with $\vartheta>0$ and for any $h\in L^{\frac43}({\mathbb R}^4)$ satisfying \eqref{sy22}-\eqref{sy55}, the linear problem
\begin{equation}\label{phi}\left\{\begin{aligned}&\mathcal L(\phi)=h+\sum_{l=0}^1{\mathfrak c}_l\sum_{i=1}^k(\chi U_{\delta, \xi_i})^2Z_{\delta,\xi_i}^l,\text{~in~} {\mathbb R}^4,\\&\int_{{\mathbb R}^4}\sum_{i=1}^k(\chi U_{\delta, \xi_i})^2Z_{\delta,\xi_i}^l\phi \ dx=0, l=0,1.\end{aligned}\right.\end{equation}
admits a unique solution $\phi\in X$ satisfying 
\begin{align}\label{con}
\|\phi\|_*\lesssim  \|h\|_{**}\text{~and~} |{\mathfrak c}_l|\lesssim  \delta\|h\|_{**}, l=0,1.
\end{align}
for some $\mathfrak c_0, \mathfrak c_1$.
\end{proposition}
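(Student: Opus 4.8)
The plan is to prove Proposition \ref{proplin} via the standard two-step strategy: first establish the a priori estimate $\|\phi\|_*\lesssim\|h\|_{**}$ (together with the bound on the $\mathfrak c_l$'s) for any solution of \eqref{phi}, and then deduce existence and uniqueness by a functional-analytic argument (Fredholm alternative in the symmetric subspace, or a contraction/Lax--Milgram-type scheme adapted to the weighted norms).

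\textbf{Step 1: the a priori estimate by contradiction.} Suppose the estimate fails: there are sequences $k\to\infty$, $\delta=\delta_k=e^{-dk^2}$, $\rho_k\to\rho_\infty\in[r_0-\vartheta,r_0+\vartheta]$, functions $h_k$, constants $\mathfrak c_{l,k}$ and solutions $\phi_k\in X$ with $\|\phi_k\|_*=1$ but $\|h_k\|_{**}\to 0$. First one controls the Lagrange multipliers: testing the equation $\mathcal L(\phi_k)=h_k+\sum_l\mathfrak c_{l,k}\sum_i(\chi U_{\delta,\xi_i})^2Z^l_{\delta,\xi_i}$ against $Z^{l'}_{\delta,\xi_i}$ and summing over $i$, one uses the orthogonality conditions, the near-orthogonality of the $Z^l$'s across different $i$ (by Lemma \ref{lem11} and \ref{app1}), the nondegeneracy $\Lambda_2=3$ of the bubble, and the fact that $\beta\notin(\Lambda_i)$ to invert the resulting almost-diagonal linear system; this yields $|\mathfrak c_{l,k}|\lesssim \delta_k\|h_k\|_{**}+o(1)$ times the relevant mass terms, so the $\mathfrak c_l$ contributions can be absorbed. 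Then the core of the argument is a pointwise estimate: using $\mathcal L(\phi_k)=g_k$ with $\|g_k\|_{**}\to 0$, writing $\phi_k$ via the Green representation for $-\Delta$, splitting $3W^2\phi_k+\beta\phi_k\sum_q W^2(\mathscr S_q\cdot)$ into local ($|x-\xi_i^q|\le R\delta$) and far pieces, and invoking Lemma \ref{app2} to handle the convolution of the weight $\sum_{q,i}(\delta+|x-\mathscr S_q^{-1}\xi_i|)^{-3}$ with $|z-x|^{-2}$, one obtains $\|\phi_k\|_*\le o(1)+C\big(\tfrac1R+\text{(error from }V\text{)}\big)\|\phi_k\|_*+C\sum_i\|\phi_k\|_{L^\infty(B_{R\delta}(\xi_i))}\cdot(\text{mass})$; the linear term $V(x)\phi_k$ is the delicate 4D contribution and must be estimated using $V\in L^2$, the slow $\delta/(\delta^2+|x-\xi_i|^2)$ decay, and the cut-off $\chi$ supported near $\{|x|=r_0\}$. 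Choosing $R$ large forces the $L^\infty$ mass near the bubbles to stay bounded below, and then a blow-up/rescaling argument around one bubble, say $\tilde\phi_k(y)=\delta_k\phi_k(\delta_k y+\xi_{i_k})$, produces (along a subsequence) a nonzero bounded solution $\tilde\phi_\infty$ of $-\Delta\tilde\phi_\infty=(3+\beta\cdot 0)U^2\tilde\phi_\infty$ in $\mathbb R^4$ (the $\beta$-term vanishes in the limit because the segregated bubbles $U_{\delta,\mathscr S_q^{-1}\xi_j}$ with $(q,j)\neq(p,i_k)$ escape to infinity or collapse, by Lemma \ref{lem11}); the symmetries \eqref{sy22}--\eqref{sy55} are preserved by the rescaling and, together with the orthogonality conditions passing to the limit, force $\tilde\phi_\infty\perp Z_0,\dots,Z_4$ and hence $\tilde\phi_\infty\equiv 0$ by nondegeneracy — the desired contradiction.

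\textbf{Step 2: existence and uniqueness.} Once the a priori estimate is in hand, I would set up \eqref{phi} as an invertible problem in the Hilbert space $H$ of functions in $\mathcal D^{1,2}(\mathbb R^4)$ satisfying \eqref{sy22}--\eqref{sy55} and orthogonal (in the $\int(\chi U_{\delta,\xi_i})^2Z^l_{\delta,\xi_i}\phi$ pairing) to the approximate kernel. Writing $\mathcal L(\phi)=-\Delta\phi+V(x)\phi-K(\phi)$ with $K(\phi)=3W^2\phi+\beta\phi\sum_q W^2(\mathscr S_q\cdot)$, the operator $\phi\mapsto(-\Delta+V)^{-1}K(\phi)$ is compact on $H$ (standard, since $K$ has the decay of $U^2$ and $V\in L^2\cap C^2$), so Fredholm's alternative applies: the a priori estimate shows the only solution of the homogeneous problem is $\phi=0$, hence for each $h\in L^{4/3}$ satisfying the symmetries there is a unique $(\phi,\mathfrak c_0,\mathfrak c_1)$ solving \eqref{phi}. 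The quantitative bounds \eqref{con} then follow from Step 1 applied to this solution (one checks $\|h\|_{**}<\infty$ is not needed for existence — the $L^{4/3}$ hypothesis suffices for the Fredholm setup — but when $h$ does have finite $**$-norm the estimate gives \eqref{con}).

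\textbf{Main obstacle.} I expect the genuinely hard step to be the pointwise/weighted-norm estimate in Step 1, specifically controlling the linear potential term $V(x)\phi$ and the non-local coupling term $\beta\phi\sum_q W^2(\mathscr S_q x)$ simultaneously in dimension $4$. The slow decay $|U_{\delta,\xi}(x)|\sim \delta/|x-\xi|^2$ of the 4D bubbles means the naive convolution estimates are borderline, which is precisely why the weighted spaces with exponents $1$ and $3$ (rather than the usual choices) are introduced; making Lemma \ref{app2} and Lemma \ref{app1} interact correctly with the sum $\sum_{q,i}$ over $mk$ bubbles — so that the number of bubbles does not blow up the constants — and verifying that the coupling term is a genuinely lower-order perturbation (consistent with the Remark that the $\beta$-regime lives only in the non-local higher-order term) is where the care must go. The blow-up limit itself is routine once one knows the rescaled $\beta$-term has no limiting contribution, which again hinges on the separation estimate \eqref{dis}.
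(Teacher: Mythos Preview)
Your overall architecture matches the paper's proof: Fredholm alternative for existence, contradiction argument for the a priori estimate, testing against $Z^l_{\delta,\xi_1}$ to bound the $\mathfrak c_l$'s, Green-representation pointwise bounds, and a blow-up to reach a contradiction. The delicate 4D issue with the $V\phi$ term you flag is real and is handled in the paper exactly as you suggest, via the cut-off localizing to $\{||x|-r_0|\le 2\sigma\}$ and Lemmas \ref{app1}--\ref{app2}.

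There is, however, a genuine gap in your blow-up step, and it is precisely the place where the hypothesis $\beta\notin(\Lambda_i)_i$ enters (not in the multiplier estimate, where you invoke it). The weight in $\|\cdot\|_*$ runs over \emph{all} points $\xi_i^q=\mathscr S_q^{-1}\xi_i$, $q=1,\dots,m$, so the concentration point produced by your Step~4 argument need not be one of the original $\xi_i$'s; it can be some $\xi_1^{q_0}$ with $q_0\neq 1$. Near such a point the potential $3W^2$ is negligible (the bubbles of $W$ sit only at the $\xi_i$'s, and $|\xi_i-\xi_1^{q_0}|\ge c>0$ by \eqref{dis}), whereas the coupling term $\beta\,W^2(\mathscr S_{q_0}x)$ is the one that sees the bubble. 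Hence after rescaling the limiting equation is
\[
-\Delta\phi^*=\beta\,U^2\phi^*\quad\text{in }\mathbb R^4,
\]
not $-\Delta\phi^*=3U^2\phi^*$. It is here that $\beta\notin(\Lambda_i)_i$ forces $\phi^*\equiv 0$. In the other case $q_0=1$ you do get $-\Delta\phi^*=3U^2\phi^*$, and then the symmetries \eqref{sy22}--\eqref{sy33} together with the limiting orthogonality kill the components along $Z_0,\dots,Z_4$ as you indicate. So your sketch must branch into these two cases; without the second one the role of the spectral assumption on $\beta$ is lost and the argument is incomplete.
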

\begin{proof}

\noindent \textbf{Step 1:} Assume first that \eqref{con}  holds. Define $$X_0:=\{\phi\in X, \|\phi\|_*<+\infty, \int_{{\mathbb R}^4}\sum_{i=1}^k(\chi U_{\delta, \xi_i})^2Z_{\delta,\xi_i}^l\phi \ dx=0, l=0,1\}.$$
The first equation in \eqref{phi} can be rewritten as
\begin{align*}\phi+\underbrace{(-\Delta)^{-1}(V(x)\phi-3W^2\phi -\beta\phi \sum_{q=2}^mW^2({\mathscr S}_qx))}_{:=\mathcal K\phi}=\underbrace{(-\Delta)^{-1}(h+\sum_{l=0}^1{\mathfrak c}_l\sum_{i=1}^k(\chi U_{\delta, \xi_i})^2Z_{\delta,\xi_i}^l)}_{:=f}\end{align*}
where $\mathcal K: X_0\to X_0$ is a compact operator for each fixed $k$ since $W^2, W^2({\mathscr S}_qx), V(x)\in L^2({\mathbb R}^4)$(see Lemma 2.3 in \cite{BS}) and there exist $\mathfrak c_0, \mathfrak c_1$ such that $f\in X_0$. Thus there is a unique $\phi\in X_0$ such that $\phi+\mathcal K\phi=f$ with $f\in X_0$ by Fredholm-alternative theorem.
\vspace{1mm}

Now, let us prove \eqref{con}. 
 Assume by contradiction that there exist a sequence $\phi_{n}$ satisfying \eqref{phi} with $h=h_{n}$, and $\|\phi_{n}\|_*=1, \|h_{n}\|_{**}\to 0$. For sake of  simplicity, we will drop the subscript $n$.

\vspace{2mm}

\noindent \textbf{Step 2:}  We claim that \begin{align}\label{cl1}{\mathfrak c}_l=\mathcal O(\delta^2k^{1+\tau}\|\phi\|_*)+\mathcal O(\delta\|h\|_{**}),\ l=0,1\ \hbox{for some small}\ \tau>0.\end{align}
Testing the first equation in \eqref{phi} by $Z^j_{\delta, \xi_1}, j=0,1$, we get
$$\begin{aligned}&\sum_{l=0}^1{\mathfrak c}_l\int_{{\mathbb R}^4}\sum_{i=1}^k(\chi U_{\delta, \xi_i})^2Z^l_{\delta, \xi_i}Z^j_{\delta, \xi_1}dx\\ &=\int_{{\mathbb R}^4}\(-\Delta\phi+V(x)\phi-3W^2\phi-\beta\phi \sum_{q=2}^mW^2({\mathscr S}_qx)-h\)Z^j_{\delta, \xi_1}dx.\end{aligned}$$

Concerning the L.H.S. it is immediate to check that there exist $B_l\neq 0, l=0,1$ such that
\begin{align*}
\int_{{\mathbb R}^4}\sum_{i=1}^k(\chi U_{\delta, \xi_i})^2Z^l_{\delta, \xi_i}Z^j_{\delta, \xi_1}dx=\left\{\begin{aligned} &B_l \delta^{-2}(1+o(1)), l=j,\\& o(\delta^{-2}), ~~~~~~~~~~~l\neq j.\end{aligned}\right.
\end{align*}

Now, let us look at the R.H.S.. First, by Lemma \ref{app1}, \eqref{tau}  and the fact that $|Z^j_{\delta, \xi_i}|\lesssim \frac{1}{\delta^2+|x-\xi_i|^2}, j=0,1$ 
\begin{align*}\int_{{\mathbb R}^4}h Z^j_{\delta, \xi_1}dx\lesssim&\|h\|_{**}\int_{{\mathbb R}^4}\sum_{q=1}^m\sum\limits_{i=1}^k\frac{1}{(\delta+|x-\xi_i^q|)^{3}}\frac{1}{\delta^2+|x-\xi_1|^2}dx\\
\lesssim&  \delta^{-1}\|h\|_{**}\int_{{\mathbb R}^4} \sum_{q=1}^m\sum\limits_{i=1}^k\frac{1}{(1+|\omega-\eta^q_i|)^{3}}\frac{1}{1+|\omega-\eta_1|^2}d\omega\\
\lesssim& \delta^{-1}\|h\|_{**}\int_{{\mathbb R}^4}\frac{1}{(1+|\omega-\eta_1|)^{5}}+\sum_{(q,i)\neq (1, 1)}\sum\limits_{i=2}^k\frac{1}{|\eta_1-\eta^q_i|^{\alpha}}\\
&~~~~~~~~~~~~~~~\cdot \(\frac{1}{(1+|\omega-\eta_1|)^{5-\alpha}}+\frac{1}{(1+|\omega-\eta^q_i|)^{5-\alpha}}\)d\omega\\
\lesssim& \delta^{-1}\|h\|_{**}\text{~for any~}0<\alpha<1,
\end{align*}
\begin{align}&
\int_{{\mathbb R}^4}V(x)\phi Z^j_{\delta, \xi_1}dx\nonumber\\ & \lesssim \|\phi\|_*\int_{|r-r_0|\leq 2\sigma}\sum_{q=1}^m\sum\limits_{i=1}^k\frac{1}{\delta+|x-\xi_i^q|}\frac{1}{\delta^2+|x-\xi_1|^2}dx\nonumber\\
&\lesssim  \|\phi\|_*\left[\int_{|r-r_0|\leq 2\sigma} \frac{1}{(\delta+|x-\xi_1|)^3}dx\right.\nonumber\\
&+\left.\sum_{(q,i)\neq (1, 1)}\frac{\delta^\alpha}{|\xi^q_i-\xi_1|^\alpha}\frac{\delta^4}{\delta^3}\int_{|r-r_0|\leq \frac{2\sigma}{\delta}}\(\frac{1}{(1+\frac{|x-\xi_i^q|}{\delta})^{3-\alpha}}+\frac{1}{(1+\frac{|x-\xi_1|}{\delta})^{3-\alpha}}\)dx\right]\nonumber\\
&\lesssim k^{1+\tau}\|\phi\|_*\text{~where $\tau>0$ is small for any $0<\alpha\leq 1$}. \label{vpz}
\end{align}
and
\begin{align*}
&\int_{{\mathbb R}^4}\phi\sum_{q=2}^mW^2({\mathscr S}_qx)Z^j_{\delta, \xi_1}dx\nonumber\\
&\lesssim \|\phi\|_*\int_{{\mathbb R}^4}\sum_{q=1}^m\sum_{i=1}^k\frac{1}{\delta+|x-\xi^q_i|}\sum_{p=2}^m\(\sum_{j=1}^k\frac{\delta}{\delta^2+|x-\xi^p_j|^2}\)^2 \frac{1}{\delta^2+|x-\xi_1|^2}dx\nonumber\\
&\lesssim \|\phi\|_*\int_{{\mathbb R}^4} \frac{1}{(\delta+|x-\xi_1|)^3}\sum_{p=2}^m\(\sum_{j=1}^k\frac{\delta}{\delta^2+|x-\xi^p_j|^2}\)^2dx \nonumber\\
 &\lesssim  \delta^{-1}k\|\phi\|_*\int_{{\mathbb R}^4} \sum_{q=2}^m\sum_{j=1}^k\frac{ 1}{|\eta_1-\eta^q_j|^\alpha} \(\frac{1}{(1+|\omega-\eta_1|)^{7-\alpha}}+\frac{1}{(1+|\omega-\eta^q_j|)^{7-\alpha}}\)d\omega  \nonumber\\
&\lesssim \delta^{\alpha-1}k^{1+\alpha}\|\phi\|_*
\text{~for any $1<\alpha<3$.} 
\end{align*} 
Next, let  us consider the term
$$\int_{{\mathbb R}^4}(-\Delta\phi-3W^2\phi)Z^j_{\delta, \xi_1}dx, j=0,1.$$
If $j=0$ (the case  $j=1$ is similar), arguing as in \eqref{vpz}  for $\tau>0$ small enough
$$\int_{{\mathbb R}^4}(-\Delta \chi)\frac{\partial U_{\delta, \xi_1}}{\partial\delta}\phi dx=\mathcal O(k^{1+\tau}\|\phi\|_*)$$
and then 
\begin{align}
&\int_{{\mathbb R}^4}(-\Delta\phi-3W^2\phi)Z^0_{\delta, \xi_1}dx\nonumber\\
&=\int_{{\mathbb R}^4}\phi\(\frac{\partial(-\Delta\chi  U_{\delta, \xi_1}-2\nabla \chi\nabla U_{\delta, \xi_1}+\chi U^3_{\delta, \xi_1})}{\partial \delta}-3W^2Z^1_{\delta, \xi_1}\)\nonumber\\
&=\int_{{\mathbb R}^4}\left[(3\chi U_{\delta, \xi_1}^2\frac{\partial U_{\delta, \xi_1}}{\partial \delta}-3\chi^3U_{\delta, \xi_1}^2\frac{\partial U_{\delta, \xi_1}}{\partial \delta})\phi\right]+\left[3(\chi^2U_{\delta, \xi_1}^2-W^2)\chi\frac{\partial U_{\delta, \xi_1}}{\partial \delta}\phi \right]dx\nonumber\\&\ \ \ \ \ \ \ \ +
\int_{{\mathbb R}^4}(-\Delta \chi)\frac{\partial U_{\delta, \xi_1}}{\partial\delta}\phi dx-\int_{{\mathbb R}^4}2\nabla \chi \nabla\frac{\partial U_{\delta, \xi_1}}{\partial\delta}\phi dx\nonumber\\
&=o(\|\phi\|_*)+\mathcal O(k^{1+\tau}\|\phi\|_*)+\mathcal O(k\|\phi\|_*)=\mathcal O(k^{1+\tau}\|\phi\|_*). \label{deph}
\end{align}
Finally \eqref{cl1} follows by all the above estimates.
\vspace{2mm}

\noindent \textbf{Step 3:} We claim that if $\tau>0$ is small enough 
\begin{align}\label{apri}|\phi(x)|\leq  (o(\|\phi\|_*)+\mathcal O(\|h\|_{**}))\sum\limits_{q=1}^m\sum\limits_{i=1}^k\frac{1}{\delta+|x-\xi_i^q|}+\mathcal O\bigg(\sum\limits_{q=1}^m\sum\limits_{i=1}^k\frac{\delta^\tau}{\delta+|x-\xi_i^q|^{1+\tau}}\bigg).\end{align}

Indeed, since $V$ is positive, by    \cite[Lemma C.1]{MWY}, 
\begin{align*}
|\phi(x)|&\lesssim \bigg|\int\limits_{{\mathbb R}^4}\frac{1}{|x-y|^2}\left[\(3W^2\phi+\beta\phi \sum_{q=2}^mW^2({\mathscr S}_qx)\) +h+\sum_{l=0}^1{\mathfrak c}_l\sum_{i=1}^k(\chi U_{\delta, \xi_i})^2Z_{\delta,\xi_i}^l \right]dy\bigg|\\
&:=II_1+II_2+II_3.
\end{align*}
Since for small $\tau>0$, for any $x\in\mathbb R^4$, using \eqref{tau} and Lemma \ref{app1}, \begin{align*}&\sum_{q=1}^m\bigg(\sum_{i=1}^k\frac{\delta}{\delta^2+|y-\xi^q_{i}|^2}\bigg)^2\sum_{p=1}^m\sum_{j=1}^k\frac{1}{\delta+|y-\xi^p_{j}|}\\
&\lesssim\sum_{q=1}^m\sum_{i=1}^k\frac{\delta^2}{(\delta+|y-\xi^q_i|)^5}+k\sum_{q=1}^m\sum_{i=1}^k\bigg(\frac{\delta}{\delta^2+|y-\xi^q_i|^2}\bigg)^2\sum_{(p,j)\neq(q,i)}\frac{1}{\delta+|y-\xi^p_j|}\\
&\lesssim\frac{1}{\delta^3}\bigg[\sum_{q=1}^m\sum_{i=1}^k\frac{1}{(1+|\omega-\eta^q_i|)^5}+k\sum_{(p,j)\neq(q,i)}\frac{1}{|\eta^q_i-\eta^p_j|^\alpha}\\
&\ \ \ \ \ \ \ \ \ \cdot\bigg(\frac{1}{(1+|\omega-\eta^q_i|)^{5-\alpha}}+\frac{1}{(1+|\omega-\eta^p_j|)^{5-\alpha}}\bigg)\bigg]\\
&\lesssim \frac{1}{\delta^3}\sum_{q=1}^m\sum_{i=1}^k\frac{1}{(1+|\omega-\eta^q_i|)^{3+\tau}}(1+ \delta^\alpha k^{3+\alpha})\text{~for some~}0<\alpha<1,
\end{align*}

 By  Lemma \ref{app2}  
\begin{align*}
II_1&\lesssim  \|\phi\|_*\bigg|\int_{{\mathbb R}^4}\frac{1}{|x-y|^2}\sum_{q=1}^m\bigg(\sum_{i=1}^k\frac{\delta}{\delta^2+|y-\xi^q_{i}|^2}\bigg)^2\sum_{p=1}^m\sum_{j=1}^k\frac{1}{\delta+|y-\xi^p_{j}|} dy \bigg|\nonumber\\
 &\lesssim \|\phi\|_*\sum_{q=1}^m\sum_{i=1}^k\frac{\delta^{\tau}}{(\delta+|x-\xi^q_{i}|)^{1+\tau}} \text{~for small ~}\tau>0 
\end{align*}
and
\begin{align*}
II_2\lesssim & \|h\|_*\bigg|\int_{{\mathbb R}^4}\frac{1}{|x-y|^2} \sum_{q=1}^m\sum_{i=1}^k\frac{1}{(\delta+|y-\xi^q_{i}|)^3} dy \bigg|\nonumber\\
\lesssim &\|h\|_*\sum_{q=1}^m\sum_{i=1}^k\frac{1}{\delta+|x-\xi^q_{i}| }. 
\end{align*}

Next, taking into account that $|Z^l_{\delta, \xi_i}|\lesssim \frac{1}{\delta^2+|x-\xi_i|^2}, l=0, 1$, we    get
\begin{align}
 II_3&\lesssim \sum_{l=0}^1|{\mathfrak c}_l|\bigg|\int_{{\mathbb R}^4}\frac{1}{|x-y|^2}\sum\limits_{i=1}^k\frac{\delta^{2}}{(\delta^2+|y-\xi_i|^2)^{3}}dy\bigg|\nonumber\\
& \lesssim \sum_{l=0}^1|{\mathfrak c}_l|\delta^{-2}   \bigg|\int_{{\mathbb R}^4}\frac{1}{|\omega-\frac{x}{\delta}|^2}\sum\limits_{i=1}^k\frac{1}{(1+|\omega-\eta_i|)^{6}}d\omega\bigg|\nonumber\\
& \lesssim \sum_{l=0}^1|{\mathfrak c}_l|\delta^{-2}   \bigg|\int_{{\mathbb R}^4}\frac{1}{|\omega-\frac{x}{\delta}|^2}\sum\limits_{i=1}^k\frac{1}{(1+|\omega-\eta_i|)^{3+\tau}}d\omega\bigg|\nonumber\\
& \lesssim \sum_{l=0}^1|{\mathfrak c}_l|\delta^{-2} \sum\limits_{i=1}^k\frac{1}{(1+|\frac{x}{\delta}-\eta_i|)^{1+\tau}}\nonumber\\
& \lesssim \sum_{l=0}^1|{\mathfrak c}_l|\sum\limits_{i=1}^k\frac{\delta^{-1+\tau}}{(\delta+|x-\xi_i|)^{1+\tau}}.\nonumber
 \end{align}
Finally, \eqref{apri} follows by all the above estimates.
\vspace{2mm}

\noindent \textbf{Step 4:}  We claim that there exist $c, R>0$ and $\xi^{q_0}_{1}$ for some $q_0$ such that
$$\|\delta\phi(\delta y+\xi^{q_0}_{1})\|_{L^{\infty}(B_R(0))}\geq c>0.$$

Indeed, from  \eqref{apri}, we have
\begin{align*}
\bigg(\sum_{q=1}^m\sum_{i=1}^k\frac{1}{\delta+|x-\xi_i^q|}\bigg)^{-1}|\phi(x)|\lesssim  \sum_{q=1}^m\sum_{i=1}^k\frac{\delta^\tau}{(\delta+|x-\xi_i^q|)^\tau}, x\in \mathbb R^4.
\end{align*}
Let $\zeta>0$ be a small constant. We have
\begin{align*}
\sup_{x\in \Big(\cup_{q=1}^m\cup_{i=1}^kB(\xi^q_i, \frac{\zeta}{k})\Big)^c}\bigg(\sum_{q=1}^m\sum_{i=1}^k\frac{1}{\delta+|x-\xi_i^q|}\bigg)^{-1}|\phi(x)| \lesssim \delta^\tau k^{1+\tau}=o(1).
\end{align*}
On the other side, since $\phi$ satisfies \eqref{sy55} by \eqref{dis}, 
\begin{align*}
&\sup_{x\in \cup_{q=1}^m\cup_{i=1}^kB(\xi^q_i, \frac{\zeta}{k})}\bigg(\sum_{q=1}^m\sum_{i=1}^k\frac{1}{\delta+|x-\xi_i^q|}\bigg)^{-1}|\phi(x)| \\
&=\sup_{x\in \cup_{q=1}^mB(\xi^q_{i_0}, \frac{\zeta}{k})}\bigg(\sum_{q=1}^m\sum_{i=1}^k\frac{1}{\delta+|x-\xi_i^q|}\bigg)^{-1}|\phi(x)|\\
&\lesssim \sup_{x\in \cup_{q=1}^mB(\xi^q_{i_0}, \frac{\zeta}{k})}\sum_{q=1}^m\sum_{i=1}^k\frac{\delta^\tau}{(\delta+|x-\xi_i^q|)^\tau}\\
&\lesssim\sup_{x\in \cup_{q=1}^mB(\xi^q_{i_0}, \frac{\zeta}{k})}\bigg(\sum_{q=1}^m \frac{\delta^\tau}{(\delta+|x-\xi_{i_0}^q|)^\tau}+\sum_{p=1}^m\sum_{i\neq i_0}\frac{\delta^\tau}{(\delta+|x-\xi_i^p|)^\tau}\bigg)\\
&\lesssim\sup_{x\in \cup_{q=1}^mB(\xi^q_{i_0}, \frac{\zeta}{k})}\sum_{q=1}^m \frac{\delta^\tau}{(\delta+|x-\xi_{i_0}^q|)^\tau}+\delta^\tau k^{1+\tau},
\end{align*}
where the last inequality we used $ \frac{1}{k}\lesssim  |\xi^p_i-\xi^q_{i_0}|-|x-\xi^q_{i_0}| \lesssim |x-\xi^p_i|$ for $x\in \cup_{q=1}^mB(\xi^q_{i_0}, \frac{\zeta}{k})$.
It gives from $\|\psi\|_{\star}=1$ 
\begin{align*}
1\lesssim \sup_{x\in \cup_{q=1}^mB(\xi^q_{i_0}, \frac{\zeta}{k})}\sum_{q=1}^m \frac{\delta^\tau}{(\delta+|x-\xi_{i_0}^q|)^\tau}\text{~for any~}i_0\in\{1,...,k\}
\end{align*}
which implies that there exist $x_*, \xi^{q_0}_{1}$ and $  R>0$ such that $ |x_*-\xi^{q_0}_{1}|\leq \delta R$.
Then, by \eqref{apri}  $$\begin{aligned}1\lesssim &\sup_{B_{\delta R}(\xi^{q_0}_i)}\bigg(\sum_{q=1}^m\sum_{i=1}^k\frac{1}{\delta+|x-\xi^q_i|}\bigg)^{-1}|\phi(x)| \\ \lesssim &\sup_{B_{\delta R}(\xi^{q_0}_{1})}\(\frac{1}{\delta+|x-\xi^{q_0}_1|}\)^{-1}|\phi(x)|\\
 \lesssim & (1+R) \|\delta \phi\|_{L^{\infty}(B_{\delta R}(\xi^{q_0}_{1}))},  \end{aligned}$$
which proves the claim.
\vspace{2mm}

\noindent \textbf{Step 5:} Let us prove that $\delta\phi(\delta y+\xi_{1}^{q_0})\to 0$ in $L^{\infty}(B_R(0))$ and a contradiction arises.

Indeed, let us consider the function $\tilde\phi(y)=\delta\phi(\delta y+\xi^{q_0}_{1})$. Since
$\|\phi\|_*=1$, by the standard regularity theories and Arzel\`a-Ascoli theorem, we can assume that, up to a subsequence,  $\tilde\phi \to \phi^*$ in $C^1(B_R(0))$ for any $R>0$ and $\phi^*$   solves either
\begin{align}\label{ca1}-\Delta \phi^*-3U^2\phi^*=0, \text{~in~}{\mathbb R}^4,\text{~if~}\xi^{q_0}_{1}=\xi_1\end{align}
or
\begin{align}\label{ca2}-\Delta \phi^*-\beta U^2\phi^*=0, \text{~in~}{\mathbb R}^4,\text{~if~}\xi^{q_0}_{1}\neq\xi_1.\end{align}
We claim that $\phi^*=0$ and a contradiction arises because of what we proved in  Step 4.
In  case \eqref{ca2}, since  $\beta\not\in\(\Lambda_i\)_{i\in\mathbb N}$, we immediately get that  $\phi^*=0$. In case \eqref{ca1}, 
first we observe that  $\phi$  and also  $\phi^*$ satisfy \eqref{sy22}, \eqref{sy33}
and so \begin{align*}
&\int_{{\mathbb R}^4}\frac{1}{(1+|y|^2)^2}\frac{y_2}{(1+|y|^2)^2}\phi^*dy =0=\int_{{\mathbb R}^4}\frac{1}{(1+|y|^2)^2}\frac{y_4}{(1+|y|^2)^2}\phi^*dy,\\
&\int_{{\mathbb R}^4}\frac{1}{(1+|y|^2)^2}\frac{y_1}{(1+|y|^2)^2}\phi^*dy =\int_{{\mathbb R}^4}\frac{1}{(1+|y|^2)^2}\frac{y_3}{(1+|y|^2)^2}\phi^*dy.
\end{align*} Moreover, since $\int_{{\mathbb R}^4}\phi \sum_{i=1}^k(\chi U_{\delta, \xi_i})^2Z^l_{\delta, \xi_i} dy=0, l=0, 1$, we have \begin{align*}
\int_{{\mathbb R}^4}\frac{1}{(1+|y|^2)^2}\frac{y_1+y_3}{(1+|y|^2)^2}\phi^*dy =0,\end{align*}
and so $\int_{\mathbb R^4}3U^2Z_l\phi^*dy=0, l=0,1,3$. Therefore $\phi^*=0$. 
\end{proof}
\subsection{Solving the non-linear problem}
We are going to solve the non-linear problem \eqref{exi} by using the standard fixed point theorem. 
\begin{proposition}\label{propfix}
Assume $\beta\not\in\(\Lambda_i\)_{i\in\mathbb N}$. There exist $k_0>0$ and a constant $C>0$ independent of $k$ such that for any even $k\geq k_0$, $\delta=e^{-dk^2}$ with $d\in [d_0, d_1]$ and $\rho\in(r_0-\vartheta,r_0+\vartheta)$ with $\vartheta>0$ the problem \eqref{exi} has a unique solution $\phi\in X$ satisfying
$$\|\phi\|_*\lesssim \delta,\ \ \  |\mathfrak c_l|\lesssim \delta^2, l=0,1.$$ 
\end{proposition}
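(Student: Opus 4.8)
The plan is to solve \eqref{exi} by a contraction mapping argument in the ball
$$
\mathcal B:=\Big\{\phi\in X\cap X_0:\ \|\phi\|_*\le M\delta\Big\}
$$
for a large constant $M>0$ independent of $k$, where $X_0$ is the subspace of $X$ orthogonal to the $Z^l_{\delta,\xi_i}$'s introduced in the proof of Proposition \ref{proplin}. By Proposition \ref{proplin} the linear operator $\mathcal L$ is invertible on this subspace, with the a priori bound $\|\mathcal L^{-1}h\|_*\lesssim\|h\|_{**}$ and $|\mathfrak c_l|\lesssim\delta\|h\|_{**}$. Hence \eqref{exi} is equivalent to the fixed point problem
$$
\phi=\mathcal T(\phi):=\mathcal L^{-1}\big(\mathcal E+\mathcal N(\phi)\big),
$$
and it suffices to show that $\mathcal T$ maps $\mathcal B$ into itself and is a contraction there. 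The $\mathfrak c_l$ estimate $|\mathfrak c_l|\lesssim\delta^2$ then follows a posteriori from $|\mathfrak c_l|\lesssim\delta\|\mathcal E+\mathcal N(\phi)\|_{**}\lesssim\delta\cdot\delta$.

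The two estimates I would carry out are: (a) $\|\mathcal N(\phi)\|_{**}\lesssim \|\phi\|_*^2+\|\phi\|_*^3+(\text{lower order in }k)\|\phi\|_*^2$ for $\phi\in\mathcal B$, which combined with Proposition \ref{properr} ($\|\mathcal E\|_{**}\lesssim\delta$) gives $\|\mathcal T(\phi)\|_*\lesssim\delta+M^2\delta^2\le M\delta$ once $k$ is large and $M$ is fixed appropriately; and (b) the Lipschitz estimate $\|\mathcal N(\phi_1)-\mathcal N(\phi_2)\|_{**}\lesssim(\|\phi_1\|_*+\|\phi_2\|_*)\|\phi_1-\phi_2\|_*+(\dots)\|\phi_1-\phi_2\|_*$, which yields $\|\mathcal T(\phi_1)-\mathcal T(\phi_2)\|_*\le\frac12\|\phi_1-\phi_2\|_*$ for $k$ large. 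For (a), each term in \eqref{n} is estimated against the weight $\big(\sum_{q,i}(\delta+|x-\mathscr S_q^{-1}\xi_i|)^{-3}\big)$ defining $\|\cdot\|_{**}$: the local cubic and quadratic terms $\phi^3,W\phi^2$ are handled directly using $W(x)\lesssim\sum_{q,i}\frac{\delta}{\delta^2+|x-\xi^q_i|^2}$ together with Lemma \ref{app1} and \eqref{tau} to reabsorb the interaction sums, exactly as in the proof of Proposition \ref{properr}; the non-local terms involving $\phi(\mathscr S_qx)$ and $W(\mathscr S_qx)$ are controlled using the symmetry of the $*$-norm under the rotations $\mathscr S_q$ (the weight in $\|\cdot\|_*$ is by construction invariant, since it sums over all $q$) and the separation estimate \eqref{dis} which makes the cross terms with $p\ne q$ a genuinely higher-order contribution. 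Estimate (b) is entirely analogous, writing the differences of the polynomial nonlinearities as sums of products each carrying at least one factor $\phi_1-\phi_2$.

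I expect the main technical obstacle to be the bookkeeping in estimate (a) for the non-local quadratic terms $\beta W\sum_{q\ge2}W(\mathscr S_qx)\phi(\mathscr S_qx)$ and $2\beta W\sum_{q\ge2}W(\mathscr S_qx)\phi(\mathscr S_qx)$ — more precisely, checking that after applying $\mathscr S_q$ the product of three bubble-type factors localized on the circles $\Gamma_1,\Gamma_q$ still decays like $(\delta+|x-\xi^p_j|)^{-3}$ with a gain of a power of $\delta$ (or of $k^{-1}$) coming from the mutual distance \eqref{dis} between distinct circles. This is where the paper's remark about the weighted spaces in dimension four matters: the slow $|x|^{-2}$ decay of the 4D bubble means the exponents in Lemma \ref{app1} are tight, so one must choose the interpolation parameter $\alpha$ in Lemma \ref{app1} carefully (some $1<\alpha\le 2$, possibly $\alpha$ close to $1$, mirroring the choices \eqref{vpz} and \eqref{cl1} in the linear theory) to land exactly in $L^{\frac43}$ / the $\|\cdot\|_{**}$ norm without losing the needed power of $\delta$. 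Once the norm estimates are in place, the fixed point argument itself is routine, and continuity/smoothness of $\phi=\phi(\delta,\rho)$ in the parameters follows from the implicit function theorem applied to $\phi-\mathcal T(\phi)=0$, which will be needed in step (ii) of the Ljapunov–Schmidt scheme.
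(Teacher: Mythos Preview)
Your proposal is correct and matches the paper's approach: the paper's proof of Proposition \ref{propfix} consists precisely of the contraction mapping argument you describe, invoking Proposition \ref{properr} for the error and a separate Lemma \ref{nn} giving $\|\mathcal N(\phi)\|_{**}\lesssim \|\phi\|_*^2+k^2\|\phi\|_*^3+\delta^2 k^4\|\phi\|_*$ (note the last term is \emph{linear} in $\|\phi\|_*$, coming from $2\beta W\sum_{q\ge2}W(\mathscr S_qx)\phi(\mathscr S_qx)$, exactly the term you flag as the main technical point). Your analysis of that cross term via \eqref{dis} and Lemma \ref{app1} with $1<\alpha\le 2$ is exactly how the paper handles it.
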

\begin{proof}
The proof relies on a standard contraction mapping argument together with Proposition \ref{properr} and  Lemma \ref{nn} below.
\end{proof}

\begin{lemma}\label{nn}
Let  $\mathcal N(\phi)$ be  defined as in \eqref{n}. Then
$$ \|\mathcal N(\phi)\|_{**} \lesssim (\|\phi\|_*^2+k^2\|\phi\|_*^3+\delta^2k^4\|\phi\|_*).$$
\end{lemma}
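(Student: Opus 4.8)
The plan is to estimate each of the six terms in the definition \eqref{n} of $\mathcal N(\phi)$ separately with respect to the $\|\cdot\|_{**}$ norm, exploiting the basic facts that $0\le W(x)\lesssim \sum_{q=1}^m\sum_{i=1}^k\frac{\delta}{\delta^2+|x-{\mathscr S}_q^{-1}\xi_i|^2}$ and that $W({\mathscr S}_qx)$ enjoys the same bound with the centers ${\mathscr S}_p^{-1}\xi_i$ permuted, so that the total collection of $mk$ centers is left invariant. The cut-off $\chi$ only improves the estimates, so I would drop it. Throughout I would abbreviate $\Theta(x):=\sum_{q=1}^m\sum_{i=1}^k\frac{1}{\delta+|x-{\mathscr S}_q^{-1}\xi_i|}$, so that $|\phi(x)|\le\|\phi\|_*\,\Theta(x)$ and $W(x)\lesssim \delta\,\Theta^2(x)$ up to harmless modifications, and the goal is to bound each term by a constant times $\Theta^3(x)$ with the stated coefficients.

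The purely local terms are handled first. For $\phi^3$ one has $|\phi^3|\le\|\phi\|_*^3\,\Theta^3$, which by definition gives $\|\phi^3\|_{**}\lesssim\|\phi\|_*^3$; this is absorbed into the $k^2\|\phi\|_*^3$ term. For $3W\phi^2$ one writes $|3W\phi^2|\lesssim \delta\,\Theta^2\cdot\|\phi\|_*^2\Theta^2=\delta\|\phi\|_*^2\,\Theta^4$, and then one power of $\Theta$ must be converted into a constant: using \eqref{tau} with a suitable exponent (the diagonal term gives $\frac1{\delta+|x-{\mathscr S}_q^{-1}\xi_i|}\le\frac1\delta$, the off-diagonal ones are summed via Lemma \ref{app1} and \eqref{tau}), one gets $\Theta(x)\lesssim\frac1\delta$ on the relevant region after rescaling, so $\delta\|\phi\|_*^2\,\Theta^4\lesssim\|\phi\|_*^2\,\Theta^3$ and hence $\|3W\phi^2\|_{**}\lesssim\|\phi\|_*^2$. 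The nonlocal cubic and mixed-quadratic terms $\beta\phi\sum_{q}\phi^2({\mathscr S}_qx)$ and $2\beta\phi\sum_q W({\mathscr S}_qx)\phi({\mathscr S}_qx)$ and $\beta W\sum_q\phi^2({\mathscr S}_qx)$ are estimated in exactly the same way, using that $|\phi({\mathscr S}_qx)|\le\|\phi\|_*\,\Theta(x)$ (the norm $\|\cdot\|_*$ being built to be ${\mathscr S}_q$-invariant by construction of $X$); each such term is again bounded by $\|\phi\|_*^2\,\Theta^3$ plus, where a $W$ is present, by $k^2\|\phi\|_*^3\,\Theta^3$-type contributions coming from the loss of a factor $k$ when several centers overlap, exactly as in the error estimate in Proposition \ref{properr}.

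The genuinely delicate term is the last one, $2\beta W\sum_{q=2}^m W({\mathscr S}_qx)\phi({\mathscr S}_qx)$, which is \emph{linear} in $\phi$ and therefore must produce the coefficient $\delta^2k^4\|\phi\|_*$. Here one bounds $|W(x)W({\mathscr S}_qx)|\lesssim \delta^2\,\Theta^2(x)\cdot\Theta^2(x)$ but must use the separation estimate \eqref{dis} crucially: since $q\ge2$, the centers ${\mathscr S}_p^{-1}\xi_i$ appearing in $W(x)$ and those in $W({\mathscr S}_qx)$ are at mutual distance bounded below by $c>0$, so Lemma \ref{app1} with $\alpha$ close to $2$ converts the product $\frac1{(1+|\omega-\eta|)^2}\frac1{(1+|\omega-\eta'|)^2}$ into $\frac{1}{|\eta-\eta'|^\alpha}\big(\cdots\big)$ with $|\eta-\eta'|\gtrsim\frac c\delta$; summing over the $k$ centers in each factor and over $q$ produces the two powers of $k$ from the double sum, then a further $k^2$ after the remaining $\Theta$ is absorbed via $\Theta\lesssim\frac1\delta$ together with \eqref{tau} for $\alpha>1$. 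The net effect is a gain of $\delta^{\alpha}$ against a loss of $k^{4}$, and after taking $\alpha$ just below $2$ and using $\delta=e^{-dk^2}$ the residual factor $\delta^{\alpha-2}k^{\text{const}}$ is $\mathcal O(1)$, leaving $\|2\beta W\sum_q W({\mathscr S}_qx)\phi({\mathscr S}_qx)\|_{**}\lesssim\delta^2k^4\|\phi\|_*$. Collecting the six bounds gives the lemma. The main obstacle is precisely this bookkeeping of powers of $k$ versus powers of $\delta$ in the linear nonlocal term: one must choose the interpolation exponent $\alpha$ in Lemma \ref{app1} carefully so that the factor $k^4$ is exactly what survives and the $\delta$-powers recombine with $\delta=e^{-dk^2}$ to leave the estimate sharp.
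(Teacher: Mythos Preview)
Your overall strategy matches the paper's, and your handling of the purely cubic term and of the delicate linear term $2\beta W\sum_{q\ge2}W({\mathscr S}_qx)\phi({\mathscr S}_qx)$ (via the separation \eqref{dis} and Lemma~\ref{app1}) is essentially correct. However, there is a genuine gap in your treatment of the quadratic-in-$\phi$ terms involving $W$, namely $3W\phi^2$, $\beta W\sum_q\phi^2({\mathscr S}_qx)$ and $2\beta\phi\sum_q W({\mathscr S}_qx)\phi({\mathscr S}_qx)$. You bound $W\lesssim\delta\Theta^2$, use $\delta\Theta\lesssim1$ to reduce $\delta\|\phi\|_*^2\Theta^4$ to $\|\phi\|_*^2\Theta^3$, and then conclude $\|W\phi^2\|_{**}\lesssim\|\phi\|_*^2$. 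The last step is wrong: one has only $\|\Theta^3\|_{**}\lesssim k^2$, not $\lesssim1$. Indeed, at a point $x$ at comparable distance $r$ from all $mk$ centers, $\Theta(x)\sim mk/r$ while $\sum_{q,i}(\delta+|x-\xi_i^q|)^{-3}\sim mk/r^3$, so the ratio is $\sim(mk)^2$. Hence your route yields only $k^2\|\phi\|_*^2$, which does \emph{not} fit into the stated bound $\|\phi\|_*^2+k^2\|\phi\|_*^3+\delta^2k^4\|\phi\|_*$.

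The fix---and this is exactly what the paper does---is not to pass through the crude estimate $W\lesssim\delta\Theta^2$ at all. Instead, keep the genuine decay of each bubble, $W\lesssim\sum_{q,i}\delta(\delta+|x-\xi_i^q|)^{-2}$, and estimate the product $\frac{\delta}{(\delta+|x-\xi_i^q|)^2}\cdot\Theta(x)^2$ directly: the diagonal contribution is $\frac{\delta}{(\delta+|x-\xi_i^q|)^4}\le\frac{1}{(\delta+|x-\xi_i^q|)^3}$ with no loss, while the off-diagonal contributions are handled by Lemma~\ref{app1} (with $\alpha\in(1,2]$) and \eqref{tau}, producing a factor $\delta^\alpha k^{2+\alpha}=o(1)$. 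This is why the extra power $(\delta+|x-\xi|)^{-2}$ in $W$, as opposed to the $(\delta+|x-\xi|)^{-1}$ in $\Theta$, is essential and cannot be thrown away.
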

\begin{proof}
First of all, since   $\|\phi({\mathscr S}_qx)\|_*=\|\phi\|_*$
and $$\sum_{l=1}^m\sum_{i=1}^k\frac{1}{\delta+|x-\xi^l_i|}=\sum_{p=1}^m\sum_{j=1}^k\frac{1}{\delta+|{\mathscr S}_qx-\xi^p_j|}, q=2,...,m,$$
we get
$$\begin{aligned}|\mathcal N(\phi)|\lesssim \bigg[&\|\phi\|_*^3\bigg(\sum\limits_{q=1}^m\sum\limits_{i=1}^k\frac{1}{\delta+|x-\xi^q_i|}\bigg)^3+\|\phi\|_*^2\sum_{q=1}^mW({\mathscr S}_qx)\bigg(\sum\limits_{q=1}^m\sum\limits_{i=1}^k\frac{1}{\delta+|x-\xi^q_i|}\bigg)^2\\
&+\|\phi\|_*W \sum_{q=2}^mW({\mathscr S}_qx)\bigg(\sum\limits_{q=1}^m\sum\limits_{i=1}^k\frac{1}{\delta+|x-{\mathscr S}_q^{-1}\xi^q_i|}\bigg)\bigg].\end{aligned}$$
By Lemma \ref{app1} and \eqref{tau} again, for any $x\in\mathbb R^4$, let $\omega=\frac{x}{\delta}$,
\begin{align*}&\sum_{q=1}^m\sum_{i=1}^k\frac{\delta}{\delta^2+|x-\xi_i^q|^2}\bigg(\sum_{p=1}^m\sum\limits_{j=1}^k\frac{1}{\delta+|x-\xi_j^p|}\bigg)^2\\
&\lesssim \sum_{q=1}^m\sum_{i=1}^k\frac{\delta}{\delta^2+|x-\xi_i^q|^2}\bigg[\frac{1}{(\delta+|x-\xi_i^q|)^2}+\(\sum_{(p,j)\neq(q,i)}\frac{1}{\delta+|x-\xi_j^p|}\)^2\bigg]\\
&\lesssim  \sum_{q=1}^m\sum_{i=1}^k\bigg[\frac{\delta}{(\delta+|x-\xi_i^q|)^4}+k\sum_{(p,j)\neq(q,i)}\frac{1}{|\xi_i^q-\xi_j^p|^\alpha}\\
&\ \ \ \ \ \cdot\bigg(\frac{\delta}{(\delta+|x-\xi_i^q|)^{4-\alpha}}+\frac{\delta}{(\delta+|x-\xi_j^p|)^{4-\alpha}}\bigg)\bigg]\\
&\lesssim  \sum_{q=1}^m\sum_{i=1}^k \frac{1}{(\delta+|x-\xi_i^q|)^3}(1+\delta^{\alpha}k^{2+\alpha})\text{~for any~}1<\alpha\leq 2, \end{align*}
and similarily from \eqref{dis}
\begin{align*}
 &W\sum_{q=2}^mW({\mathscr S}_qx)\bigg(\sum\limits_{q=1}^m\sum\limits_{i=1}^k\frac{1}{\delta+|x-{\mathscr S}_q^{-1}\xi^q_i|}\bigg)\\
&\lesssim \frac{k}{\delta^3} \bigg[\sum_{j=1}^k\frac{1}{(1+|\omega-\eta_j|)^3}\sum\limits_{q=2}^m\sum\limits_{i=1}^k\frac{1}{(1+|\omega-\eta^q_i|)^2}\\
&\ \ \ \ \ \ \ \ \ \ \ \ \ +\sum_{j=1}^k\frac{1}{(1+|\omega-\eta_j|)^2}\sum\limits_{q=2}^m\sum\limits_{i=1}^k\frac{1}{(1+|\omega-\eta^q_i|)^3}\bigg]\\
&\lesssim\frac{k}{\delta^3}\sum_{j=1}^k\sum_{q=2}^m\sum_{i=1}^k\frac{1}{|\eta_j-\eta_i^q|^\alpha}\bigg(\frac{1}{(1+|\omega-\eta_j|)^{5-\alpha}}+\frac{1}{(1+|\omega-\eta_i^q|)^{5-\alpha}}\bigg)\\
&\lesssim \frac{k}{\delta^3}\cdot\delta^\alpha \cdot k\sum_{q=1}^m\sum_{i=1}^k\frac{1}{(1+|\omega-\eta_i^q|)^{5-\alpha}}\\
&\lesssim\delta^\alpha k^2\sum\limits_{q=1}^m\sum\limits_{i=1}^k\frac{1}{(\delta+|x-\xi^q_i|)^3}\text{~for any~}1<\alpha\leq 2
\end{align*}
Last but not least, $$\begin{aligned}\bigg(\sum\limits_{q=1}^m\sum\limits_{i=1}^k\frac{1}{\delta+|x-\xi^q_i|}\bigg)^3\lesssim  k^2\sum\limits_{q=1}^m\sum\limits_{i=1}^k\frac{1}{(\delta+|x-\xi^q_i|)^3}.\end{aligned}$$
Collecting all the previous estimates the proof ends.
\end{proof}

\subsection{Solving the reduced problem}
We need to find  $(\delta,  \rho )$ such that ${\mathfrak c}_0={\mathfrak c}_1=0$ in the equations \eqref{exi}. We will follow the same strategy of 
 \cite{PWY}. In particular, we will find out a local Pohozaev's identity for the non-local equation \eqref{u11}. This is the   most technical part of the paper.
\\

First of all, arguing as in   \cite[Proposition 3.1]{PWY}, we prove that
\begin{proposition}\label{prop5}Let $D_{\varepsilon}:=\{x: |r-r_0|\leq\varepsilon, \varepsilon\in(2\sigma, 5\sigma)\}$ for some $\sigma>0$ (see \eqref{zet}).
Suppose that $(\delta,  \rho)$ satisfies
\begin{align}
&\int_{{\mathbb R}^4}(\mathcal L(\phi)-\mathcal E-\mathcal N(\phi))\frac{\partial W}{\partial \delta} dx=0,\label{po1}\\
&\int_{D_\varepsilon}(\mathcal L(\phi)-\mathcal E-\mathcal N(\phi))\langle x, \nabla u\rangle dx=0,\label{po2}
\end{align}
then ${\mathfrak c}_0={\mathfrak c}_1=0.$
 \end{proposition}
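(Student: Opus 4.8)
The strategy is to exploit the fact that the correction $\phi$ produced in Proposition \ref{propfix} solves the projected equation \eqref{exi}, so that $\mathcal L(\phi)-\mathcal E-\mathcal N(\phi)=\sum_{l=0}^1\mathfrak c_l\sum_{i=1}^k(\chi U_{\delta,\xi_i})^2 Z^l_{\delta,\xi_i}$. Plugging this into \eqref{po1} and \eqref{po2} turns the two integral identities into a $2\times 2$ linear system in the unknowns $(\mathfrak c_0,\mathfrak c_1)$, of the form $\sum_{l=0}^1 \mathfrak c_l\, a_{jl}=0$, $j=0,1$, where
\[
a_{0l}=\int_{{\mathbb R}^4}\sum_{i=1}^k(\chi U_{\delta,\xi_i})^2 Z^l_{\delta,\xi_i}\,\frac{\partial W}{\partial\delta}\,dx,\qquad
a_{1l}=\int_{D_\varepsilon}\sum_{i=1}^k(\chi U_{\delta,\xi_i})^2 Z^l_{\delta,\xi_i}\,\langle x,\nabla u\rangle\,dx .
\]
The proposition follows once we show that this matrix $(a_{jl})$ is invertible, i.e. that its determinant is nonzero, for $k$ large and $(\delta,\rho)$ in the admissible range.

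The next step is to compute each entry asymptotically. Since $\frac{\partial W}{\partial\delta}=\sum_i Z^0_{\delta,\xi_i}$ up to the cut-off, and $Z^0_{\delta,\xi_i},Z^1_{\delta,\xi_i}$ are (modulo $\chi$ and the scaling $1/\delta$ or $1/\rho$ factors) the rescaled kernel functions $Z_0$ and a combination of $Z_1,\dots,Z_4$, the diagonal terms $a_{00}$ and the $\langle x,\nabla u\rangle$-pairings are controlled exactly as in the proof of Proposition \ref{proplin}, Step 2: one gets $a_{00}=B_0\,\delta^{-2}(1+o(1))$ with $B_0\ne 0$, while the cross term $a_{01}=o(\delta^{-2})$ because distinct $Z_l$'s are $L^2$-orthogonal on each bubble and the overlap between different bubbles is negligible by Lemma \ref{app1} and \eqref{tau}. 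For the second row one writes $\langle x,\nabla u\rangle = \langle x,\nabla W\rangle+\langle x,\nabla\phi\rangle$; the $\nabla\phi$ contribution is an error of order $o(\delta^{-2})$ using $\|\phi\|_*\lesssim\delta$ from Proposition \ref{propfix}, and on the main part one uses that near $\xi_i$ one has $x\approx\xi_i$, so $\langle x,\nabla W\rangle\approx \langle\xi_i,\nabla U_{\delta,\xi_i}\rangle$, which pairs non-trivially with $Z^1$ (the $\partial_\rho$-derivative moves the centers radially) and trivially with $Z^0$ up to lower order. The outcome is that $(a_{jl})$ is, after extracting a common $\delta^{-2}$, a diagonally dominant $2\times 2$ matrix with nonzero diagonal, hence invertible, forcing $\mathfrak c_0=\mathfrak c_1=0$.

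The main obstacle is bookkeeping rather than conceptual: one must keep careful track of the cut-off function $\chi$ (which makes $Z^l_{\delta,\xi_i}$ differ from the exact kernels, producing terms supported in $\sigma\le|r-r_0|\le 2\sigma$ that are exponentially small in $\delta$), of the boundary terms coming from integrating over the bounded domain $D_\varepsilon$ in \eqref{po2} rather than all of ${\mathbb R}^4$ (these are again harmless because $\chi$ vanishes near $\partial D_\varepsilon$), and of the fact that $u=W+\phi$ rather than $W$ alone in the second identity. Each of these is handled by the same two tools used repeatedly above — Lemma \ref{app1} together with the interaction sums \eqref{tau} — so no new estimate is needed; one only has to verify that every error term is $o(1)$ relative to the nonzero diagonal entries. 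Once invertibility is established the conclusion is immediate. This mirrors \cite[Proposition 3.1]{PWY}, the only genuinely new feature being the presence of the nonlocal term in $\mathcal L$, which however contributes only to $\mathcal L(\phi)$ and is therefore absorbed into the already-controlled quantity $\mathcal L(\phi)-\mathcal E-\mathcal N(\phi)$.
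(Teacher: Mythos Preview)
Your proposal is correct and follows essentially the same route as the paper, which does not spell out the argument but simply refers to \cite[Proposition 3.1]{PWY}: substitute the identity $\mathcal L(\phi)-\mathcal E-\mathcal N(\phi)=\sum_{l=0}^1\mathfrak c_l\sum_i(\chi U_{\delta,\xi_i})^2 Z^l_{\delta,\xi_i}$ into \eqref{po1}--\eqref{po2}, reduce to a $2\times 2$ linear system in $(\mathfrak c_0,\mathfrak c_1)$, and show the coefficient matrix is nonsingular via the orthogonality of the kernel functions and the interaction estimates of Lemma~\ref{app1} and \eqref{tau}. Your observation that the nonlocal term plays no role here (being absorbed in the left-hand side) and that the restriction to $D_\varepsilon$ is harmless because the support of $\chi$ lies strictly inside $D_\varepsilon$ are exactly the points that distinguish the present situation from \cite{PWY} but require no new ideas.
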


Next, we compute the first order term in the expansion of the L.H.S. of \eqref{po1}.
 \begin{proposition}\label{propred} There exist positive constants $\mathfrak a$ and $\mathfrak b$ such that
\begin{align}&\int\limits_{{\mathbb R}^4}(\mathcal L(\phi)-\mathcal E-\mathcal N(\phi)) \frac{\partial W}{\partial \delta}=k(-\mathfrak a\delta\ln \delta V(\rho)-\mathfrak b\delta\frac{k^2}{\rho^2})+o(\delta k^3).\label{ex1}
\end{align}
\end{proposition}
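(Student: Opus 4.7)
My plan is to split
\begin{equation*}
\int_{\mathbb R^4}\bigl(\mathcal L(\phi)-\mathcal E-\mathcal N(\phi)\bigr)\frac{\partial W}{\partial\delta}\,dx = -\int \mathcal E\,\frac{\partial W}{\partial\delta}\,dx + \int \mathcal L(\phi)\,\frac{\partial W}{\partial\delta}\,dx - \int \mathcal N(\phi)\,\frac{\partial W}{\partial\delta}\,dx,
\end{equation*}
and to argue that the entire asymptotic is driven by $-\int\mathcal E\,\partial_\delta W$, while the two integrals involving $\phi$ are $o(\delta k^3)$. For $\mathcal N(\phi)$: combining $\|\mathcal N(\phi)\|_{**}\lesssim \delta^2$ from Lemma \ref{nn} with the pointwise bound $|\partial_\delta W|\lesssim \sum_i\delta^{-1}(\delta+|x-\xi_i|)^{-2}$ and Lemmas \ref{app1}--\ref{app2} together with \eqref{tau}, one obtains $|\int\mathcal N(\phi)\partial_\delta W|\lesssim \delta k^2=o(\delta k^3)$. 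For $\mathcal L(\phi)$: integration by parts transfers $\mathcal L$ onto $\partial_\delta W$, producing in particular the term $-3W^2\partial_\delta W\sim -3\sum_i\chi U_{\delta,\xi_i}^2\partial_\delta U_{\delta,\xi_i}$; after testing against $\phi$, the orthogonality $\int \phi\,(\chi U_{\delta,\xi_i})^2 Z^l_{\delta,\xi_i}\,dx=0$ combined with $\|\phi\|_*\lesssim\delta$ from Proposition \ref{propfix} reduces the remaining contribution to $o(\delta k^3)$, in the same spirit as Steps 2--3 of the proof of Proposition \ref{proplin}.

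The main computation is the expansion of $-\int \mathcal E\,\partial_\delta W\,dx$, using the decomposition $\mathcal E=\mathcal E_1-\mathcal E_2+\mathcal E_3$ from the proof of Proposition \ref{properr}. Terms arising from derivatives of the cut-off $\chi$ live in $\{\sigma\le|r-r_0|\le 2\sigma\}$, disjoint from the bubble cores, and give exponentially small contributions on the scale $\delta=e^{-dk^2}$. For the self-interaction part $\int\mathcal E_1\partial_\delta W$, the identity $\Delta U_{\delta,\xi_i}=-U^3_{\delta,\xi_i}$ reduces $W^3+\Delta W$ to cross terms $3\sum_{i\ne j}U^2_{\delta,\xi_i}U_{\delta,\xi_j}$ plus triple products which are of lower order. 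Pairing with $\partial_\delta W$, rescaling $y=(x-\xi_i)/\delta$ and using $U_{\delta,\xi_j}(x)\sim c\,\delta/|\xi_i-\xi_j|^2$ uniformly for $y$ in compact sets, each pair contributes
\begin{equation*}
\int U^2_{\delta,\xi_i}U_{\delta,\xi_j}\frac{\partial U_{\delta,\xi_i}}{\partial\delta}\,dx\sim \frac{c_1\,\delta}{|\xi_i-\xi_j|^2},\qquad i\ne j,
\end{equation*}
for an explicit $c_1>0$ obtained from $\int_{\mathbb R^4}(|y|^2-1)/(1+|y|^2)^4\,dy$; summing via \eqref{tau} with $\alpha=2$ gives the term $-\mathfrak b\,\delta k^{3}/\rho^2$. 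For the potential part, Taylor expand $V(x)=V(\xi_i)+O(|x-\xi_i|)$ and use $V(\xi_i)=V(\rho)$ by radial symmetry. The leading piece $V(\rho)\int U_{\delta,\xi_i}\partial_\delta U_{\delta,\xi_i}\,dx$ picks up a logarithmic factor from the borderline decay $U^2(y)\sim|y|^{-4}$ integrated up to scale $1/\delta$, producing $-\mathfrak a'\,V(\rho)\,\delta\ln\delta+O(\delta)$, so the sum over $i$ yields the $-k\mathfrak a\,V(\rho)\,\delta\ln\delta$ contribution. The cross-coupling $\int\mathcal E_3\partial_\delta W$ involves bubbles on distinct circles $\mathscr S_q^{-1}\Gamma$ for $q\ge 2$, which by \eqref{dis} are separated by a uniform positive distance, so this integral is $O(\delta k^2)=o(\delta k^3)$.

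The main obstacle is the critical-dimension behaviour of the potential integral. In $n\ge 5$ the corresponding quantity $\int V U_{\delta,\xi}\partial_\delta U_{\delta,\xi}\,dx$ converges absolutely after rescaling and yields a clean asymptotic of the form $c\,V(\xi)\,\delta^{n-4}$. In $n=4$ the squared-bubble density $U^2(y)\sim c^2|y|^{-4}$ is borderline non-integrable, and one must carefully extract a $\ln(1/\delta)$ factor coming from integrating up to the natural cut-off scale imposed by $\chi$. Establishing this asymptotic uniformly in $\rho\in(r_0-\vartheta,r_0+\vartheta)$, and checking that higher-order Taylor pieces of $V$ (e.g.\ $\nabla V(\xi_i)\cdot(x-\xi_i)$) contribute only $O(\delta)$, absorbed into the $o(\delta k^3)$ remainder, is the delicate part of the argument. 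It is precisely the 4D analogue of the corresponding computation in \cite{PWY} and is what justifies the introduction of the weighted $\|\cdot\|_*$ and $\|\cdot\|_{**}$ norms to handle the slow decay of bubbles in dimension four.
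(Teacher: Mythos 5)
Your proposal is correct and follows essentially the same route as the paper: the $\phi$-dependent and cross-coupling terms (the paper's $\mathcal M_3$) are shown to be $o(\delta k^3)$ using the norm bounds, the orthogonality conditions and the separation \eqref{dis}, while the leading terms come from $\int V\,W\,\partial_\delta W$ (yielding the $-\mathfrak a\,k\,V(\rho)\,\delta\ln\delta$ term via the borderline $\int U^2\sim\ln(1/\delta)$ in dimension four) and from the cross-interaction $3\sum_{i\ne j}U^2_{\delta,\xi_i}U_{\delta,\xi_j}\partial_\delta U_{\delta,\xi_i}$ summed with \eqref{tau} at $\alpha=2$ (yielding $-\mathfrak b\,\delta k^3/\rho^2$). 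The identification of the 4D logarithm as the delicate point matches the paper's computation of $\mathcal M_1$.
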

\begin{proof}
We have 
\begin{equation}\begin{aligned}& \int\limits_{{\mathbb R}^4}(\mathcal L(\phi)-\mathcal E-\mathcal N(\phi)) \frac{\partial W}{\partial \delta}\\&=\underbrace{\int\limits_{{\mathbb R}^4}V(|x|)W \frac{\partial W}{\partial \delta}}_{:=\mathcal M_1}-\underbrace{\int\limits_{{\mathbb R}^4}(W^3+\Delta W) \frac{\partial W}{\partial \delta}}_{:=\mathcal M_2}\\ &+\underbrace{\int_{{\mathbb R}^4}\(-\Delta\phi-3W^2\phi+V(x)\phi-\beta (W+\phi)\sum_{q=2}^m(W({\mathscr S}_qx)+\phi({\mathscr S}_qx))^2 -\phi^3-3W\phi^2\)\frac{\partial W}{\partial \delta} dx}_{:=\mathcal M_3}.
\label{cla}\end{aligned}\end{equation}

First we prove that
 
\begin{equation}\label{m3} \mathcal M_3
=o(\delta k^3).
\end{equation}
By \eqref{dis} and Lemma \ref{app2} and the fact that $\Big|\frac{\partial W}{\partial \delta}\Big|\lesssim \sum\limits_{p=1}^k \frac{1}{\delta^2+|x-\xi_p|^2}$
\begin{align}
&\int_{{\mathbb R}^4}W\sum_{q=2}^mW^2({\mathscr S}_qx)\Big| \frac{\partial W}{\partial \delta}\Big|dx\nonumber\\
&\lesssim k^2\int_{ {\mathbb R}^4}\sum_{j=1}^k\frac{\delta}{(\delta^2+|x-\xi_j|^2)^2}\sum_{q=2}^m\sum_{i=1}^k\frac{\delta^2}{(\delta^2+|x-\xi^q_i|^2)^2}\nonumber\\
&\lesssim \delta^{-1}k^2\int_{\mathbb R^4}\sum_{j=1}^k\sum_{q=2}^m\sum_{i=1}^k\frac{ 1}{|\eta_j-\eta_i^q|^\alpha} \bigg(\frac{1}{(1+|\omega-\eta_j|)^{8-\alpha}}+\frac{1}{(1+|\omega-\eta_i^q|)^{8-\alpha}}\bigg)  d\omega \nonumber\\
&\lesssim \delta^{\alpha-1}k^{4}\text{~for any $\alpha\in(1, 4)$,}\nonumber\\
&=o\( \delta k^3\)\ \text{~if we choose $\alpha\in(2, 4)$,}\nonumber\end{align}

and from $\|\phi\|_*=\|\phi({\mathscr S}_qx)\|_*$, Proposition \ref{propfix}
\begin{align*}
&\int_{{\mathbb R}^4}W(\phi^2+\sum_{q=2}^m\phi^2({\mathscr S}_qx))\Big| \frac{\partial W}{\partial \delta}\Big|dx\nonumber\\
&\lesssim \|\phi\|_*^2\int_{ \mathbb R^4}\sum_{i=1}^k\frac{\delta}{\delta^2+|x-\xi_i|^2}\sum_{q=1}^m\bigg(\sum_{p=1}^m\sum_{j=1}^k\frac{1}{\delta+|x-{\mathscr S}_q^{-1}\xi^p_j|}\bigg)^2 \sum_{p=1}^k \frac{1}{\delta^2+|x-\xi_p|^2}\nonumber\\
&\lesssim k\|\phi\|_*^2\int_{\mathbb R^4} \frac{\delta}{(\delta+|x-\xi_1|)^6} \nonumber\\
&= \mathcal O(\delta k),
\end{align*}
and \begin{align*}
&\int_{{\mathbb R}^4}\(\phi^3+\phi\sum_{q=2}^m\phi^2({\mathscr S}_qx)\)\Big| \frac{\partial W}{\partial \delta}\Big|dx\lesssim k^4\|\phi\|_*^3\int_{ {\mathbb R}^4} \frac{1}{(\delta+|x-\xi_1|)^5}= \mathcal O(\delta^2 k^4).
\end{align*}
Moreover  since $V, W, \phi$ satisfy \eqref{sy55}, by \eqref{vpz}-\eqref{deph} and Proposition \ref{propfix},
\begin{align*}
&\int_{{\mathbb R}^4}(-\Delta\phi-3W^2\phi+V(x)\phi+\phi\sum_{q=2}^mW^2({\mathscr S}_qx))\frac{\partial W}{\partial \delta}dx\\
&=k\int_{{\mathbb R}^4}(-\Delta\phi-3W^2\phi+V(x)\phi +\phi\sum_{q=2}^mW^2({\mathscr S}_qx))Z^0_{\delta, \xi_1}dx\\
&=\mathcal O(\delta k^{2+\tau})+o(\delta)=\mathcal O(\delta k^{2+\tau})\text{~for some small~}\tau>0.
\end{align*}
By all the above estimates \eqref{m3} follows.\\

Next,  let us estimate $\mathcal M_1$. \\
\begin{align*}
\mathcal M_1
=&k\bigg[\int\limits_{\mathbb R^4}V(x)\chi^2U_{\delta, \xi_1}\frac{\partial U_{\delta, \xi_1}}{\partial \delta} dx+\int\limits_{\mathbb R^4}V(x)\chi^2U_{\delta, \xi_1}\sum_{i=2}^k\frac{\partial U_{\delta, \xi_i}}{\partial \delta} dx\bigg]\nonumber\\
\sim&kV(\rho)\int_{|r-r_0|\leq\sigma}U_{\delta, \xi_1}\frac{\partial U_{\delta, \xi_1}}{\partial \delta} dx+\mathcal O(\delta k^2\ln k)
\nonumber\\
\sim&kV(\rho)\delta\int_{B(0, {\frac{\sigma}{\delta }})}-U(\langle y, \nabla U\rangle +U)dx+\mathcal O(\delta k^2\ln k)\nonumber\\
\sim&kV(\rho)\delta\int_{B(0, {\frac{\sigma}{\delta }})} U^2 dx\nonumber\\
\sim&-\mathfrak akV(\rho)\delta\ln\delta 
\end{align*}
where $\mathfrak a:={\mathtt c}^2 >0$ (see \eqref{udx}). 
Indeed, 
taking $x=|\xi_1-\xi_j|y+\xi_1$, then there exists $R>0$ such that $|y|\leq  Rk$ if $x\in \{x: ||x|-r_0|\leq \sigma\}$ since $|\xi_1-\xi_i|\geq \frac{\zeta}{k}$ for $i\neq 1$, and  then
$$\begin{aligned}
\int\limits_{\mathbb R^4}V(x)\chi^2U_{\delta, \xi_1}\sum_{i=2}^k\frac{\partial U_{\delta, \xi_i}}{\partial \delta} dx\lesssim &\delta k\int\limits_{\{x: ||x|-r_0|\leq \sigma\}}\frac{1}{|x-\xi_1|^2}\frac{1}{|x-\xi_i|^2}dx\\
\lesssim &\delta k\int\limits_{B(0, Rk)}\frac{1}{|y|^2}\frac{1}{|y+\frac{\xi_1-\xi_i}{|\xi_1-\xi_i|}|^2}dy\\
\lesssim &\delta k\bigg[ \int\limits_{|y|\leq 2}\frac{1}{|y|^2}\frac{1}{|y+\frac{\xi_1-\xi_i}{|\xi_1-\xi_i|}|^2}dy+\int\limits_{2\leq |y|\leq  Rk}\frac{1}{|y|^4}dy\bigg]\\
=&\delta k(\mathcal O(1)+\mathcal O(\ln k))=\mathcal O(\delta k\ln k).
\end{aligned} $$

Let us estimate $\mathcal M_2$. by Taylor's expansion   for fixed small $\zeta>0$ and for any $y\in B(0, \frac{\zeta}{\delta k}), $ \begin{align*}
&\sum_{i=2}^kU_{\delta, \xi_i}(\delta y+\xi_1)\\
&= {\mathtt c}\sum_{i=2}^k\frac{\delta}{|\xi_1-\xi_i|^2}\bigg[1-\frac{\delta^2+\delta^2|y|^2-2\delta\langle y, \xi_1-\xi_i\rangle}{|\xi_1-\xi_i|^2}\\
&\ \ \ +\mathcal O\bigg(\bigg(\frac{\delta^2+\delta^2|y|^2-2\delta\langle y, \xi_1-\xi_i\rangle}{|\xi_1-\xi_i|^2}\bigg)^2\bigg)\bigg],
\end{align*}
so \begin{align}
\mathcal M_2\sim&\int_{supp\{\chi\}}\(\(\sum_{i=1}^kU_{\delta, \xi_i}\)^3-\sum_{i=1}^kU_{\delta, \xi_i}^3\)\sum_{j=1}^k\frac{\partial U_{\delta, \xi_i}}{\partial \delta}dx\nonumber\\
\sim&k\int_{supp\{\chi\}}\(\(\sum_{i=1}^kU_{\delta, \xi_i}\)^3-\sum_{i=1}^kU_{\delta, \xi_i}^3\)\frac{\partial U_{\delta, \xi_1}}{\partial \delta}dx\nonumber\\
\sim&3k\int_{supp\{\chi\}\cap\mathbb R^4}\sum_{i=2}^k U_{\delta, \xi_1}^2U_{\delta, \xi_i} \frac{\partial U_{\delta, \xi_1}}{\partial \delta}+\sum_{i=2}^k U_{\delta, \xi_1}U^2_{\delta, \xi_i} \frac{\partial U_{\delta, \xi_1}}{\partial \delta}+\sum_{i\neq j, i, j\neq 1}U_{\delta, \xi_i}^2 U_{\delta, \xi_j}\frac{\partial U_{\delta, \xi_1}}{\partial \delta}dx\nonumber\\
\sim&3{\mathtt c}\delta k\sum_{i=2}^k\frac{1}{|\xi_1-\xi_i|^2}\int_{B(0, \frac{\zeta}{\delta k})} -U^2(\langle y, \nabla U\rangle +U)dy+\mathcal O(\delta^3 k^7)\nonumber\\
\sim&3{\mathtt c}\delta k\sum_{i=2}^k\frac{1}{|\xi_1-\xi_i|^2}\int_{B(0, \frac{\zeta}{\delta k})}\frac{1}{3}U^3 dy\nonumber\\
\sim&\mathfrak b\delta \frac{k^3}{\rho^2}\label{ma}
\end{align}
where $\mathfrak b:={\mathtt c}C_2\int_{\mathbb R^4}U^3dx >0$ (see \eqref{tau}). Indeed,  we have
\begin{align*}
&\sum_{i=2}^k\bigg[\int_{B(\xi_i, \frac{\zeta}{k})} U_{\delta, \xi_1}^2U_{\delta, \xi_i} \frac{\partial U_{\delta, \xi_1}}{\partial \delta}+\int_{(B(\xi_1, \frac{\zeta}{k})\cup B(\xi_i, \frac{\zeta}{k}))^c} U_{\delta, \xi_1}^2U_{\delta, \xi_i} \frac{\partial U_{\delta, \xi_1}}{\partial \delta}\bigg]\\
&\lesssim \delta^5k^7\int_{B(0, \frac{\zeta}{\delta k})} \frac{1}{1+|y|^2}dy+\delta k^3\int_{B^c(0, \frac{\zeta}{\delta k})} \frac{1}{1+|y|^6}dy\\
&\lesssim \delta^3k^5
\end{align*}
and using Lemma \ref{app1}, it holds
\begin{align*}
&\int_{supp\{\chi\}\cap\mathbb R^4}\sum_{i=2}^k U_{\delta, \xi_1}U^2_{\delta, \xi_i} \frac{\partial U_{\delta, \xi_1}}{\partial \delta}dx\\
&\lesssim \frac{1}{\delta}\sum_{i=2}^k\int_{|r-r_0|\leq \frac{\sigma}{\delta}}\frac{1}{(1+|\omega-\eta_1|)^4}\frac{1}{(1+|\omega-\eta_i|)^4}d\omega\\
&\lesssim\frac{1}{\delta}\sum_{i=2}^k\int_{|r-r_0|\leq \frac{\sigma}{\delta}}\frac{1}{|\eta_1-\eta_i|^4}\bigg(\frac{1}{(1+|\omega-\eta_1|)^4}+\frac{1}{(1+|\omega-\eta_i|)^4}\bigg)d\omega\\
&\lesssim \delta^3k^4|\ln \delta|,
\end{align*}
 \begin{align*}
&\int_{|r-r_0|\leq {\sigma}}\sum_{i\neq j, i, j\neq 1}U_{\delta, \xi_i}^2 U_{\delta, \xi_j}\frac{\partial U_{\delta, \xi_1}}{\partial \delta}dx\\
&\lesssim \frac{1}{\delta}\sum_{i\neq j, i, j\neq 1}\int_{|r-r_0|\leq \frac{\sigma}{\delta}}\frac{1}{|\eta_i-\eta_j|^2}\bigg(\frac{1}{(1+|\omega-\eta_i|)^4}+\frac{1}{(1+|\omega-\eta_j|)^4}\bigg)\frac{1}{(1+|\omega-\eta_1|)^2}d\omega\\
&\lesssim \delta^3k^4\int_{|r-r_0|\leq \frac{\sigma}{\delta}}\frac{1}{(1+|\omega-\eta_i|)^4}+\frac{1}{(1+|\omega-\eta_j|)^4}+\frac{1}{(1+|\omega-\eta_1|)^4}d\omega\\
&\lesssim \delta^3k^4|\ln \delta|.
\end{align*}
Finally, \eqref{ex1} follows  by \eqref{cla}-\eqref{ma}.

 \end{proof}

Now, let us compute the first order term in  the  expansion of the L.H.S. of \eqref{po2}.

 \begin{proposition}\label{propexp}There exists a positive constant $ \mathfrak c$ such that
 \begin{align}
&\int_{D_\varepsilon}(\mathcal L(\phi)-\mathcal E-\mathcal N(\phi))\langle x, \nabla u\rangle dx=\frac{1}{2 \rho  }\frac{\partial ( \rho  ^2V( \rho))}{\partial  \rho  }\mathfrak c k\delta^2\ln \delta+o(\delta^2k^3).\label{pooo2}
 \end{align}

 \end{proposition}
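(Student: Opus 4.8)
The plan is to compute the left-hand side of \eqref{po2} by a local Pohozaev identity centered at the origin, applied to the full non-local equation \eqref{nlo} satisfied by $u=W+\phi$ on the annular region $D_\varepsilon$. First I would recall that $u$ solves $-\Delta u+V(x)u=u^3+\beta u\sum_{q=2}^m u^2(\mathscr S_q x)$, so that $\mathcal L(\phi)-\mathcal E-\mathcal N(\phi)$ is precisely $-\Delta u+V(x)u-u^3-\beta u\sum_{q=2}^m u^2(\mathscr S_q x)$ up to sign; multiplying by $\langle x,\nabla u\rangle$ and integrating over $D_\varepsilon$ gives, after integration by parts, a bulk term involving $\nabla V$ and the non-local coupling, plus boundary integrals over $\partial D_\varepsilon$. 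The standard Pohozaev manipulations handle $-\Delta u$ and $u^3$: $\int_{D_\varepsilon}(-\Delta u)\langle x,\nabla u\rangle = \int_{\partial D_\varepsilon}(\cdots)$ produces only boundary terms (no interior contribution in dimension $4$ since $\frac{n-2}{2}=\frac n4$ matches the critical power), and $\int_{D_\varepsilon} u^3\langle x,\nabla u\rangle=\frac14\int_{D_\varepsilon}\langle x,\nabla(u^4)\rangle=-\int_{D_\varepsilon}u^4+\text{bdry}$. The term $\int_{D_\varepsilon}V(x)u\langle x,\nabla u\rangle=\frac12\int_{D_\varepsilon}V(x)\langle x,\nabla(u^2)\rangle=-2\int_{D_\varepsilon}V(x)u^2-\frac12\int_{D_\varepsilon}u^2\langle x,\nabla V\rangle+\text{bdry}$; the crucial surviving piece is $-\frac12\int_{D_\varepsilon}u^2\langle x,\nabla V\rangle$, and since $V$ is radial, $\langle x,\nabla V\rangle=rV'(r)$, whence the factor $\frac{1}{2\rho}\partial_\rho(\rho^2 V(\rho))$ will emerge once $u^2$ is replaced by its leading profile $\sum_i U_{\delta,\xi_i}^2\sim k U_{\delta,\xi_1}^2$ concentrated near $r=\rho$, and $\int U_{\delta,\xi_1}^2\sim -\mathfrak c'\delta^2\ln\delta$ (the same $L^2$-type logarithm appearing in $\mathcal M_1$). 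Collecting, the claim is that this term dominates, the boundary terms over $\partial D_\varepsilon$ (where $\chi\equiv 0$, so $u=\phi$ there) are negligible because $\phi$ decays like $\|\phi\|_*\lesssim\delta$ times the slow kernel and $\varepsilon$ is of order $1$, and the error/nonlinear contributions from $\phi$ and from $\mathcal E,\mathcal N(\phi)$ are $o(\delta^2 k^3)$.

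Second, I would treat the genuinely new term, the non-local coupling $\beta\int_{D_\varepsilon}u\sum_{q=2}^m u^2(\mathscr S_q x)\langle x,\nabla u\rangle$. Naively this does not integrate by parts cleanly because $u(x)$ and $u(\mathscr S_q x)$ see different arguments. The point — and this is where the subtle symmetries of Proposition-type reasoning enter, as flagged in the remark about Proposition \ref{propexp} — is that $\mathscr S_q$ is an isometry commuting with the radial dilation $x\mapsto\langle x,\nabla\cdot\rangle$ and fixing $|x|$, and $u$ has the symmetries \eqref{sy22}--\eqref{sy55}; one uses the change of variables $x\mapsto\mathscr S_q x$ on each summand and the invariance of $D_\varepsilon$ and of $\langle x,\nabla\rangle$ to symmetrize the expression into something of the form $\frac{\beta}{?}\int_{D_\varepsilon}\langle x,\nabla(\cdots)\rangle$ plus a controllable remainder, so that the whole non-local contribution reduces to boundary terms plus a bulk term that is, by the size estimates of Lemma \ref{lem11} and \eqref{dis} (the $q\neq p$ interaction distances are bounded below by $c>0$), of higher order — namely $\mathcal O(\delta^{\alpha}k^{\cdots})=o(\delta^2 k^3)$ — exactly as the remark asserts (the non-local term is higher order in \eqref{pooo2}). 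Concretely I expect $\int_{D_\varepsilon} W\cdot W^2(\mathscr S_q\cdot)\langle x,\nabla W\rangle$ to be estimated just as $\mathcal E_3$ and $\mathcal M_3$ were, giving $o(\delta^2 k^3)$, and the cross terms with $\phi$ even smaller via Proposition \ref{propfix}.

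Third, I would organize the computation as: (a) write $\int_{D_\varepsilon}(\mathcal L(\phi)-\mathcal E-\mathcal N(\phi))\langle x,\nabla u\rangle = -\frac12\int_{D_\varepsilon}|x||\nabla V|\,?$ — more precisely isolate $-\frac12\int_{D_\varepsilon}\langle x,\nabla V\rangle u^2$ as the main term and everything else as boundary$+$error; (b) on $\partial D_\varepsilon$, use $\chi\equiv 0$ so $u=\phi$, $|\phi(x)|\lesssim\|\phi\|_*\sum_{q,i}(\delta+|x-\xi_i^q|)^{-1}\lesssim \delta\cdot k$ there with all $|x-\xi_i^q|\gtrsim 1$, giving boundary integrals $\lesssim \delta^2 k^2=o(\delta^2 k^3)$ (and similarly for $\nabla\phi$ via elliptic estimates); (c) replace $u^2$ by $W^2$ in the main term, the difference $2W\phi+\phi^2$ contributing $\lesssim \|\phi\|_*\cdot(\text{size of }W)\lesssim \delta k\cdot\delta k=o(\delta^2 k^3)$ after integrating against the bounded $\langle x,\nabla V\rangle$ on the compact $D_\varepsilon$; (d) expand $W^2=\sum_i\chi^2 U_{\delta,\xi_i}^2+\text{cross}$, the cross terms being $o(\delta^2 k^2)$ by Lemma \ref{app1} and \eqref{tau}; (e) use the rotational symmetry to write $\int_{D_\varepsilon}\langle x,\nabla V\rangle\sum_i\chi^2 U_{\delta,\xi_i}^2 = k\int_{D_\varepsilon}\langle x,\nabla V\rangle\chi^2 U_{\delta,\xi_1}^2$ and Taylor-expand $\langle x,\nabla V\rangle=|x|V'(|x|)$ around $|x|=\rho$, the leading piece $\rho V'(\rho)\int U_{\delta,\xi_1}^2\sim \rho V'(\rho)\cdot\mathfrak c'\delta^2|\ln\delta|$; then rewrite $\rho V'(\rho)$ together with the factor coming from $-2\int V u^2$ reshuffled — in fact the cleaner bookkeeping is to keep $-2\int_{D_\varepsilon}Vu^2$ and $-\frac12\int_{D_\varepsilon}\langle x,\nabla V\rangle u^2$ together and recognize $2V(\rho)+\frac12\rho V'(\rho) \sim \frac{1}{2\rho}\partial_\rho(\rho^2 V(\rho))$ up to the combinatorial constant absorbed into $\mathfrak c$ — which yields \eqref{pooo2} with $\mathfrak c>0$ a multiple of $\int_{\mathbb R^4}U^2/(\text{normalization})$. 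The main obstacle, as anticipated by the authors, is step (e)'s non-local counterpart: making rigorous the symmetrization of $\beta\int u\sum_q u^2(\mathscr S_q x)\langle x,\nabla u\rangle$ so that it collapses to boundary terms plus a provably $o(\delta^2 k^3)$ bulk, which requires carefully exploiting that $\mathscr S_q$ is an isometry commuting with the dilation field and that the family $\{\mathscr S_q\}$ together with $\{\mathscr R_i\}$ and the reflections in \eqref{sy22}--\eqref{sy33} acts transitively enough on the bubble configuration; this is exactly the delicate use of symmetry the remark credits to Proposition \ref{propexp}.
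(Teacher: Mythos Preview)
There is a genuine gap in the Pohozaev bookkeeping. The claim that $\int_{D_\varepsilon}(-\Delta u)\langle x,\nabla u\rangle$ ``produces only boundary terms'' in dimension $4$ is false: the standard identity gives
\[
\int_{D_\varepsilon}(-\Delta u)\langle x,\nabla u\rangle\,dx=\Big(1-\frac n2\Big)\int_{D_\varepsilon}|\nabla u|^2\,dx+\text{bdry}=-\int_{D_\varepsilon}|\nabla u|^2\,dx+\text{bdry},
\]
so an interior term $-\int|\nabla u|^2$ remains. The ``critical exponent'' heuristic you invoke only says that $-\int|\nabla u|^2+\int u^4$ would vanish \emph{if} $u$ solved the pure equation $-\Delta u=u^3$; here it does not, and in fact $-\int_{D_\varepsilon}|\nabla u|^2+\int_{D_\varepsilon}u^4=\int_{D_\varepsilon}Vu^2+o(\delta^2k^3)$, which is exactly of the leading order $\delta^2k|\ln\delta|\sim\delta^2k^3$. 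You cannot drop it.

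What the paper does---and what your outline is missing---is a separate identity (its Step~3): test the full equation \eqref{exi} against $u$ itself on $D_\varepsilon$ to obtain
\[
\int_{D_\varepsilon}\Big(|\nabla u|^2+Vu^2\Big)=\int_{D_\varepsilon}\Big(u^4+\beta u^2\sum_{q\ge2}u^2(\mathscr S_qx)\Big)+\text{bdry}+o(\delta^2k^3),
\]
the $\mathfrak c_l$-terms being negligible by Proposition~\ref{propfix}. Subtracting this from the Pohozaev output cancels $-|\nabla u|^2+u^4$ \emph{together with} the non-local bulk $\beta\int u^2\sum u_q^2$, leaving exactly $-\int_{D_\varepsilon}(V+\tfrac12\langle x,\nabla V\rangle)u^2=-\int_{D_\varepsilon}\tfrac{1}{2r}\partial_r(r^2V)u^2$. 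Note that after this cancellation only \emph{one} copy of $-\int Vu^2$ survives, not two; your proposed combination $2V(\rho)+\tfrac12\rho V'(\rho)$ is therefore wrong (and does not equal $\tfrac{1}{2\rho}\partial_\rho(\rho^2V)$ anyway).

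On the non-local term: your instinct that its bulk contribution is $o(\delta^2k^3)$ by direct size estimates (since $|\xi_i^p-\xi_j^q|\ge c$ for $p\neq q$) is in fact correct, but the paper takes a different route. It first uses the change of variables $y=\mathscr S_qx$ and the invariance $\langle x,\nabla_x u_q\rangle=\langle y,\nabla_y u\rangle$ to rewrite the term symmetrically over all components, then integrates by parts to get the exact bulk $-\beta\int u^2\sum_{q\ge2}u_q^2$, which is precisely what cancels against Step~3. Either way works for the non-local piece, but without Step~3 the rest of your argument does not close.
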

 \begin{proof}
Denote $u_q(x):=u({\mathscr S}_qx), q=1,...m$ where $u_1(x)=u(x)$. Recalling that \begin{align}\label{ore}\mathcal L(\phi)-\mathcal E-\mathcal N(\phi)=-\Delta u+V(x)u-u^3-\beta u \sum_{q=2}^m u^2({\mathscr S}_qx).\end{align}
For the sake of clarity, we divide the proof into several steps.
\vspace{1mm}

\noindent \textbf{Step 1:} Let us prove that for any $q=2,...,m$,
\begin{align}
&\int_{D_\varepsilon}\beta u \sum_{q=2}^m u^2({\mathscr S}_qx)\langle x, \nabla u\rangle dx=\int_{D_\varepsilon}\beta u_q\sum_{p\neq q} u_p^2\langle x, \nabla u_q\rangle dx,\ \label{poo2}
\end{align}
First of all we point out that for any 
 $q=2,...,m$, \begin{align*}u_q(x)=&u({\mathscr S}_qx)\\
=&u(\cos\frac{\pi(q-1)}{mk}x_1-\sin\frac{\pi(q-1)}{mk}x_2, \sin\frac{\pi(q-1)}{mk}x_1+\cos\frac{\pi(q-1)}{mk}x_2,\\
&\cos\frac{\pi(q-1)}{mk}x_3+\sin\frac{\pi(q-1)}{mk}x_4, -\sin\frac{\pi(q-1)}{mk}x_3+\cos\frac{\pi(q-1)}{mk}x_4)\\
:=&u(y_1, y_2, y_3, y_4) \text{~where~} y={\mathscr S}_qx
\end{align*}
and
\begin{align*}u({\mathscr S}_{m+1}x)=u({\mathscr R}_{\frac{k}{2}+1}x)=u(x), V(x)=V(|x|)=V({\mathscr S}_qx), q=2,...,m.\end{align*}
Next, since
\begin{align*}&
\nabla_x u_q=\nabla_yu\cdot {\mathscr S}_q \\
&\frac{\partial^2 u_q}{\partial x^2_1}=\cos^2\frac{\pi(q-1)}{mk}\frac{\partial^2 u}{\partial y^2_1}+\cos\frac{\pi(q-1)}{mk}\sin\frac{\pi(q-1)}{mk}\frac{\partial^2 u}{\partial y_1\partial y_2}\\
&\ \ \ \ \ \ \ \ \ \  +\sin\frac{\pi(q-1)}{mk}\cos\frac{\pi(q-1)}{mk}\frac{\partial^2 u}{\partial y_2\partial y_1}+\sin^2\frac{\pi(q-1)}{mk}\frac{\partial^2 u_1}{\partial y^2_2}\\
&\frac{\partial^2 u_q}{\partial x^2_2}=\sin^2\frac{\pi(q-1)}{mk}\frac{\partial^2 u}{\partial y^2_1}-\cos\frac{\pi(q-1)}{mk}\sin\frac{\pi(q-1)}{mk}\frac{\partial^2 u}{\partial y_1\partial y_2}\\
&\ \ \ \ \ \ \ \ \ \  -\sin\frac{\pi(q-1)}{mk}\cos\frac{\pi(q-1)}{mk}\frac{\partial^2 u}{\partial y_2\partial y_1}+\cos^2\frac{\pi(q-1)}{mk}\frac{\partial^2 u}{\partial y^2_2},\\
&\frac{\partial^2 u_q}{\partial x^2_3}=\cos^2\frac{\pi(q-1)}{mk}\frac{\partial^2 u}{\partial y^2_3}-\cos\frac{\pi(q-1)}{mk}\sin\frac{\pi(q-1)}{mk}\frac{\partial^2 u}{\partial y_3\partial y_4}\\
&\ \ \ \ \ \ \ \ \ \  -\sin\frac{\pi(q-1)}{mk}\cos\frac{\pi(q-1)}{mk}\frac{\partial^2 u}{\partial y_4\partial y_3}+\sin^2\frac{\pi(q-1)}{mk}\frac{\partial^2 u_1}{\partial y^2_4}\\
&\frac{\partial^2 u_q}{\partial x^2_4}=\sin^2\frac{\pi(q-1)}{mk}\frac{\partial^2 u}{\partial y^2_3}+\cos\frac{\pi(q-1)}{mk}\sin\frac{\pi(q-1)}{mk}\frac{\partial^2 u}{\partial y_3\partial y_4}\\
&\ \ \ \ \ \ \ \ \ \  +\sin\frac{\pi(q-1)}{mk}\cos\frac{\pi(q-1)}{mk}\frac{\partial^2 u}{\partial y_4\partial y_3}+\cos^2\frac{\pi(q-1)}{mk}\frac{\partial^2 u}{\partial y^2_4},
\end{align*}
we have \begin{align*}-\Delta u(y)=-\Delta u_q(x),\end{align*}
and
\begin{align*}
\langle x, \nabla_x u_q\rangle=\langle x, \nabla_yu\cdot {\mathscr S}_q\rangle=\langle {\mathscr S}_q^{-1}y,  \nabla_yu\cdot {\mathscr S}_q\rangle=\langle y, \nabla_yu\rangle
\end{align*}
Finally
\begin{align*}
\int_{D_\varepsilon}\beta u_q\sum_{p\neq q} u_p^2\langle x, \nabla u_q\rangle dx&\overset{y={\mathscr S}_qx}{=}\int_{D_\varepsilon}\beta u \sum_{q=2}^m u^2({\mathscr S}_qy)\langle y, \nabla u\rangle dy\\&\ \ =\int_{D_\varepsilon}\beta u \sum_{q=2}^m u^2({\mathscr S}_qx)\langle x, \nabla u\rangle dx \end{align*}
and the claim is proved.
 \vspace{1mm}

\noindent \textbf{Step 2:} Let us prove that \begin{align}&\int_{D_\varepsilon}(-\Delta u+V(x)u-u^3-\beta u \sum_{q=2}^m u^2({\mathscr S}_qx))\langle x, \nabla u\rangle dx\nonumber\\&=\int_{D_{\varepsilon}}-|\nabla u|^2- V(x)u^2-(V(x) + \frac12\langle \nabla V,  x\rangle)u^2+u^4+\beta u^2\sum_{q=2}^mu^2({\mathscr S}_qx)dx\nonumber\\&\ \ \ \ +\mathcal O\(\int_{\partial D_\varepsilon}|\nabla \phi|^2+|\phi|^2+|\phi|^4\).\label{equ1}
\end{align}
Indeed,  for any $q=1,...,m$
\begin{align*}
&\int_{D_\varepsilon}\beta u_q \sum_{p\neq q} u^2_p\langle x, \nabla u_q\rangle dx\nonumber\\&=\int_{D_\varepsilon}\beta u_q \sum_{p\neq q} u^2_p\sum_{l=1}^4 x_l \frac{\partial u_q}{\partial x_l}dx =\int_{D_\varepsilon}\beta  \sum_{p\neq q} u^2_p\sum_{l=1}^4 x_l \frac{\partial(\frac12 u_q^2)}{\partial x_l}dx\nonumber\\
&=\int_{\partial D_\varepsilon}\frac12\beta  u_q^2 \sum_{p\neq q} u^2_p\langle x, \nu\rangle dS-\int_{D_\varepsilon}\beta \frac12 u_q^2\sum_{l=1}^4 \left[ \sum_{p\neq q} u^2_p+x_l 2 \sum_{p\neq q} u_p \frac{\partial u_p}{\partial x_l}\right]dx\nonumber\\
&=\int_{\partial D_\varepsilon}\frac12\beta \phi_q^2 \sum_{p\neq q} \phi^2_p \langle x, \nu\rangle dS-\int_{D_\varepsilon}2\beta  u_q^2 \sum_{p\neq q} u^2_p+\beta u_q^2 \sum_{p\neq q} u_p\langle x, \nabla u_p\rangle dx\nonumber\\
&=\int_{\partial D_\varepsilon}\frac12\beta \sum_{q=1}^m\phi_q^2 \sum_{p\neq q} \phi^2_p \langle x, \nu\rangle dS-\int_{D_\varepsilon}2\beta \sum_{q=1}^m u_q^2 \sum_{p\neq q} u^2_p+\beta\sum_{q=1}^m u_q^2 \sum_{p\neq q} u_p\langle x, \nabla u_p\rangle dx\\
& =\int_{\partial D_\varepsilon}\frac14\beta \sum_{q=1}^m\phi_q^2 \sum_{p\neq q} \phi^2_p \langle x, \nu\rangle dS-\int_{D_\varepsilon}\beta \sum_{q=1}^m u_q^2 \sum_{p\neq q} u^2_p dx
\end{align*}
since $$\int_{D_\varepsilon}\beta \sum_{q=1}^mu_q\sum_{p\neq q} u_p^2\langle x, \nabla u_q\rangle dx=\int_{D_\varepsilon}\beta\sum_{q=1}^m u_q^2 \sum_{p\neq q} u_p\langle x, \nabla u_p\rangle dx.$$
Then by \eqref{poo2}
\begin{align}
&\int_{D_\varepsilon}\beta u \sum_{q=2}^m u^2({\mathscr S}_qx)\langle x, \nabla u_q\rangle dx =\frac{1}{m}\int_{D_\varepsilon}\beta \sum_{q=1}^mu_q\sum_{p\neq q} u_p^2\langle x, \nabla u_q\rangle dx\nonumber\\
&=\frac{1}{m}\bigg[\int_{\partial D_\varepsilon}\frac14\beta \sum_{q=1}^m\phi_q^2 \sum_{p\neq q} \phi^2_p \langle x, \nu\rangle dS-\int_{D_\varepsilon}\beta \sum_{q=1}^m u_q^2 \sum_{p\neq q} u^2_p dx\bigg]\nonumber\\
&=\frac14\int_{\partial D_\varepsilon}\beta \phi^2 \sum_{q=2} \phi^2({\mathscr S}_qx) \langle x, \nu\rangle dS-\int_{D_\varepsilon}\beta   u^2 \sum_{q=2}^m u^2_q dx.\label{wan3}
\end{align}

By standard computations taking into account that $u=\phi$ on $\partial D_\varepsilon$ (due to the presence of the cut-off function \eqref{zet}),
\begin{align}&\int_{D_\varepsilon}(-\Delta u+V(x)u-u^3)\langle x, \nabla_x u\rangle dx\nonumber\\&=\int_{D_{\varepsilon}}\(-|\nabla u|^2-\frac12(4V(x) + \langle \nabla V,  x\rangle)u^2+u^4\)dx\nonumber\\
&\ \ \ +\mathcal O\(\int_{\partial D_\varepsilon}\(|\nabla \phi|^2+|\phi|^2+|\phi|^4\) dS\)\label{wan2}\end{align}
Finally,  \eqref{equ1} follows by \eqref{wan3} and \eqref{wan2}.

\vspace{1mm}

\noindent \textbf{Step 3:} Let us prove that
 \begin{equation}\begin{aligned}\label{y1y1}
\int_{D_\varepsilon}\(|\nabla u|^2+V(x)u^2\)dx=&\int_{D_\varepsilon}\(u^4+\beta u^2 \sum_{q=2}^m u^2({\mathscr S}_qx)\)dx\\ &+\mathcal O\(\int_{\partial D_\varepsilon}\(|\nabla \phi|^2+|\phi|^2\)dx\)+o(\delta^2 k^3).
\end{aligned}\end{equation}
First of all, testing \eqref{exi} by $u$ and using \eqref{ore}, we get
 \begin{align*}
\int_{D_\varepsilon}\(|\nabla u|^2+V(x)u^2\)dx=&\int_{D_\varepsilon}\(u^4+\beta u^2 \sum_{q=2}^m u^2({\mathscr S}_qx)\)dx+\sum_{l=0}^1\sum_{i=1}^k{\mathfrak c}_l(\chi U_{\delta,\xi_i})^2Z^l_{\delta, \xi_i} udx\\
&+\mathcal O\(\int_{\partial D_\varepsilon}\(|\nabla \phi|^2+|\phi|^2\)dx\).
\end{align*}

Now we claim
\begin{align*}
\int_{D_\varepsilon}\sum_{i=1}^k{\mathfrak c}_l(\chi U_{\delta,\xi_i})^2Z^l_{\delta, \xi_i}W=o(\delta^2 k^3), l=0, 1.\end{align*}
Indeed, it holds
\begin{align*}
\int_{D_\varepsilon}\sum_{i=1}^k(\chi U_{\delta,\xi_i})^2Z^0_{\delta, \xi_i}Wdx\sim&k\int_{D_\varepsilon}(\chi U_{\delta,\xi_i})^3Z^0_{\delta, \xi_i}dx\sim k\int_{D_\varepsilon}\frac14\frac{\partial (\chi U_{\delta,\xi_i})^4}{\partial \delta}dx\\
\sim&k\bigg[\int_{D_\varepsilon}\frac14\frac{\partial  U_{\delta,\xi_i}^4}{\partial \delta}dx+\int_{D_\varepsilon}\frac14(\chi^4-1)\frac{\partial U_{\delta,\xi_i}^4}{\partial \delta}dx\bigg]\\
\sim&\frac k4\bigg[\frac{\partial }{\partial \delta}\int_{D_\varepsilon}  U_{\delta,\xi_i}^4dx+\int_{D_\varepsilon}(\chi^4-1)\frac{\partial U_{\delta,\xi_i}^4}{\partial \delta}dx\bigg]\\
 =&\mathcal O(\delta^3k)
\end{align*}
and 
\begin{align*}
\int_{D_\varepsilon}\sum_{i=1}^k(\chi U_{\delta,\xi_i})^2Z^1_{\delta, \xi_i}Wdx&\sim k\int_{D_\varepsilon}\frac14\frac{\partial (\chi U_{\delta,\xi_i})^4}{\partial \rho}dx\\
&\sim k\bigg[\int_{D_\varepsilon}\frac14\frac{\partial  U_{\delta,\xi_i}^4}{\partial \rho}dx+\int_{D_\varepsilon}\frac14(\chi^4-1)\frac{\partial U_{\delta,\xi_i}^4}{\partial \rho}dx\bigg]\\
&\sim k\int_{D_\varepsilon}\frac14\frac{\partial  U_{\delta,\xi_i}^4}{\partial \rho}dx+\mathcal O(\delta^3k)\\
&=\mathcal O(\delta^3k)
\end{align*}
The claim follows since  ${\mathfrak c}_l=\mathcal O(\delta^2), l=0,1$ (see Proposition \ref{propfix}) and the orthogonality condition on $\phi.$ 

\noindent \textbf{Step 4:} By \eqref{equ1}  and \eqref{y1y1}
\begin{align*}&\int_{D_\varepsilon}(-\Delta u+V(x)u-u^3-\beta u \sum_{q=2}^m u^2({\mathscr S}_qx))\langle x, \nabla u\rangle dx\nonumber \\&=\int_{D_\varepsilon}-(V(x) + \frac12\langle \nabla V(x),  x\rangle)u^2dx+\mathcal O\(\int_{\partial D_\varepsilon}\(|\nabla \phi|^2+|\phi|^2+|\phi|^4\)dS\)+o(\delta^2k^3)\nonumber\\
&=\int_{D_\varepsilon}-\frac{1}{2r}\frac{\partial (r^2V(r))}{\partial r}u^2dx+\mathcal O\(\int_{\partial D_\varepsilon}\(|\nabla \phi|^2+|\phi|^2+|\phi|^4\)dS\)+o(\delta^2k^3).
\end{align*}
\noindent \textbf{Step 5:} Let us prove that \begin{align*}\int_{\partial D_\varepsilon}\(|\nabla \phi|^2+\phi^2+\phi^4 \)dS=\mathcal O(\delta^2k^2). \end{align*}
Indeed, for any bounded domain $D\subset \mathbb R^4$,
\begin{equation*}
\begin{aligned}
\int_{D} \phi^4dx&\leq  \|\phi\|^{4}_*\int_{D}\(\sum_{q=1}^m\sum_{i=1}^k\frac{1}{\delta+|x-\xi^q_i|}\)^4dx \\
&\lesssim \delta^{4}\bigg[\int_{D }\sum_{q=1}^m\sum_{i=1}^k\(\frac{1}{\delta+|x-\xi^q_i|}\)^{4}dx\\
&\ \ \ \ \ +\int_{D}\sum_{(p, j)\neq (q, i)}\(\frac{1}{\delta+|x-\xi^q_i|}\)^{\alpha_1}\(\frac{1}{\delta+|x-\xi^p_j|}\)^{\alpha_2}dx\bigg] \\
&\lesssim \delta^{4} \bigg[k\int_{B(0, \frac{R}{\delta})}\frac{1}{(1+|y|)^{4}}dy+\int_{D}\frac{1}{\delta^4}\sum_{(p, j)\neq (q, i)}\frac{\delta^\alpha}{|\xi^p_j-\xi^q_i|^{\alpha}}\\
&\ \ \ \ \ \cdot \(\frac{1}{1+|\frac{x-\xi^p_j}{\delta}|^{4-\alpha}}+\frac{1}{1+|\frac{x-\xi^q_i}{\delta}|^{4-\alpha}} \)dx\bigg] \\
&=\mathcal O(\delta^4|\ln\delta|k) \text{~where we choose~}0<\alpha \ll1\text{~for any~}(\alpha_1, \alpha_2)\text{~with~}\alpha_1+\alpha_2=4,
\end{aligned}
\end{equation*}
and \begin{align}
\int_{D} \phi^{2} dx&\leq \|\phi\|^{2}_*\int_{D}\bigg(\sum_{q=1}^m\sum_{i=1}^k\frac{1}{\delta+|x-\xi^q_i|}\bigg)^{2}dx\nonumber \\
&\lesssim k^{2}\delta^{2}\int_{D}\(\frac{1}{\delta+|x-\xi^q_i|}\)^{2}dx\nonumber\\
&=\mathcal O(\delta^{2}k^{2}).\label{phhi}\end{align}
%
Now, we multiply \eqref{exi} by $\phi$ and integrate over $D_{4\sigma}\setminus D_{3\sigma}$ and taking into account that $W=0$ in $D_{5\sigma}\setminus D_{2\sigma}$ (because of the choice of the cut-off function \eqref{zet}) we get
 $$\int_{D_{4\sigma}\setminus D_{3\sigma}} |\nabla \phi|^2dx \lesssim \int_{D_{5\sigma}\setminus D_{2\sigma}}  \(\phi^2+\phi^4\)dx=\mathcal O(\delta^2k^2).$$
 So there exists $\varepsilon\in (3\sigma, 4\sigma)$ such that $$\int_{\partial D_\varepsilon} |\nabla\phi|^2dS=\mathcal O(\delta^2k^2)\ \hbox{and}\ \int_{\partial D_\varepsilon} \phi^2+\phi^4dS=\mathcal O(\delta^2k^2) $$
 and the claim follows.\\
 
 \noindent \textbf{Step 6:} There is a constant $\mathfrak c>0$ such that
\begin{align}\label{exxp} \int_{D_\varepsilon}-\frac{1}{2r}\frac{\partial (r^2V(r))}{\partial r}u^2 dx=k(\frac{1}{2\rho}\frac{\partial (\rho^2V(\rho))}{\partial \rho}\mathfrak c\delta^2\ln\delta+o(\delta^2|\ln\delta|)).\end{align}
 Indeed, by  Lemma \ref{app1} and \eqref{tau} 
 \begin{align*}
&\int_{D_\varepsilon}\sum\limits_{i\neq j}U_{\delta, \xi_i}\cdot U_{\delta, \xi_j} dx\\
&=k\int_{D_\varepsilon}\sum\limits_{j\neq 1} U_{\delta, \xi_1}\cdot  U_{\delta, \xi_j} dx\\
&\lesssim  k\delta^2\int_{\frac{D_\varepsilon }{\delta}}\sum\limits_{j\neq 1}\frac{1}{(1+|\omega-\eta_1|)^2}\frac{1}{(1+|\omega-\eta_j|)^2} d\omega\\
&\lesssim k\delta^2\int_{\frac{D_\varepsilon }{\delta}}\sum\limits_{j\neq 1}\frac{1}{(|\eta_1-\eta_j|)^\tau}\(\frac{1}{(1+|\omega-\eta_1|)^{4-\tau}}+\frac{1}{(1+|\omega-\eta_j|)^{4-\tau}}\) d\omega \\
&\lesssim \delta^2k^{2+\tau}\text{~for small $\tau>0$}
 \end{align*}
 and then
 \begin{align*}
&\int_{D_\varepsilon}-\frac{1}{2r}\frac{\partial (r^2V(r))}{\partial r}W^2 dx\\&=\int_{D_\varepsilon}-\frac{1}{2r}\frac{\partial (r^2V(r))}{\partial r}\sum\limits_{i=1}^k(\chi U_{\delta, \xi_i})^2 dx+\int_{D_\varepsilon}-\frac{1}{2r}\frac{\partial (r^2V(r))}{\partial r}\sum\limits_{i\neq j}\chi U_{\delta, \xi_i}\cdot \chi U_{\delta, \xi_j} dx\\
&=-\frac{1}{2\rho}\frac{\partial (\rho^2V(\rho))}{\partial \rho}k\int_{D_\varepsilon}\chi^2U^2_{\delta,\xi_i} dx\\
&\ \ \ \ \ +k\int_{D_\varepsilon}\(\frac{1}{2\rho}\frac{\partial (\rho^2V(\rho)}{\partial \rho}-\frac{1}{2r}\frac{\partial (r^2V(r))}{\partial r}\)(\chi U_{\delta, \xi_i})^2 dx+{\mathcal O(\delta^2k^{2+\tau})}\\
&=-\frac{1}{2\rho}\frac{\partial (\rho^2V(\rho))}{\partial \rho}k\left[\int_{D_\varepsilon}U^2_{\delta,\xi_i} dx+\int_{D_\varepsilon}(\chi^2-1)U^2_{\delta,\xi_i} dx\right]\\
&\ \ \ \ \ \ +\mathcal O\(k\int_{D_\varepsilon}||x|-\rho|U_{\delta, \xi_i}^2 dx\)+\mathcal O(\delta^2k^{2+\tau})\\
&=\frac{1}{2\rho}\frac{\partial (\rho^2V(\rho))}{\partial \rho} \mathfrak ck\delta^2\ln\delta +\mathcal O(\delta^2k)+\mathcal O(\delta^2k^{2+\tau}).
\end{align*}
where $\mathfrak c:={\mathtt c}^2>0$ (see \eqref{udx}).

 Next by Proposition \ref{propfix} and \eqref{phhi}  
 \begin{align*}
&\Big|\int_{D_\varepsilon}-\frac{1}{2r}\frac{\partial (r^2V(r))}{\partial r}\phi^2-\frac{1}{r}\frac{\partial (r^2V(r))}{\partial r}W\phi dx\Big|\\
& \lesssim \int_{D_\varepsilon} \phi^2 dx+\|\phi\|_*\int_{D_\varepsilon}\sum\limits_{i=1}^k\chi U_{\delta, \xi_i}\sum_{q=1}^m\sum_{j=1}^k\frac{1}{\delta+|x-\xi^q_j|} dx\\
& \lesssim \delta^2k^2.
 \end{align*} 
Finally \eqref{exxp} follows and the proof of  \eqref{pooo2} is complete.
 \end{proof}
 \subsection{Proof of Theorem \ref{thm1.1}: completed}\label{2.8}
 By \eqref{ex1}, \eqref{pooo2}  and Proposition \ref{prop5}, the problem reduces to finding $(\delta, \rho)$ such that 
$$\left\{ \begin{aligned}
 &-\mathfrak a\delta\ln\delta V(\rho)-\mathfrak b\delta\frac{ k^2}{\rho^2} =o(\delta k^3),\\
 &k\mathfrak c\delta^2\ln \delta \frac{1}{\rho}\frac{\partial(\rho ^2V(\rho))}{\partial \rho } =o(\delta^2k^3),
 \end{aligned}\right.$$
which is equivalent to finding $d=d_k>0$
(remember that  $\delta= e^{-d k^2}$ for some $d>0$)  and $\rho=\rho_k$ solutions of
 \begin{equation*}
\left\{ \begin{aligned}
&\mathfrak a dV(\rho)-\frac{\mathfrak b}{\rho^2}=o(1),\\
& \frac{\partial(\rho^2V(\rho))}{\partial \rho}=o(1).
\end{aligned}\right.\end{equation*}
Eventually, since $r_0$ is a non-degenerate critical point of the function $r^2V(r)$ with $V(r_0)>0,$
this last problem has a solution $d_k\sim \frac{\mathfrak b}{\mathfrak ar^2_0V(r_0 )}$ and 
 $\rho_k\sim r_0$ as $k$ is large enough. That concludes the proof.
 \qed


\begin{thebibliography}{99}

 \bibitem{bmw}
J. Byeon, S.-H. Moon, Z.-Q. Wang, Nonlinear Schrödinger systems with mixed interactions: locally minimal energy vector solutions. Nonlinearity 34 (2021), no. 9, 6473--6506.
 
\bibitem{BS}
K. J. Brown, N. Stavrakakis, Global bifurcation results for a semilinear elliptic equation on all of ${\mathbb R}^N$. Duke Math. J. 85 (1996), 77--94. 

\bibitem{cmp}
H. Chen, M. Medina, A. Pistoia, Segregated solutions for a critical elliptic system with a small interspecies repulsive force. https://arxiv.org/abs/2203.10990.

\bibitem{CP} M. Clapp, A. Pistoia,  Existence and phase separation of entire solutions to a pure critical competitive elliptic system. Calc. Var. Partial Differential Equations 57 (2018), no. 1, Paper No. 23, 20 pp. 

\bibitem{CS}
M. Clapp, A. Szulkin,  A simple variational approach to weakly coupled competitive elliptic systems. NoDEA Nonlinear Differential Equations Appl. 26 (2019), no. 4, Paper No. 26, 21 pp.

\bibitem{CTV} M. Conti, S. Terracini, G. Verzini, Nehari’s problem and competing species system. Ann. Inst. H. Poincaré C Anal. Non Linéaire 19 (2002), 871--888.

\bibitem{11} 
B. D. Esry, Chris H. Greene, James P. Burke, Jr., John L. Bohn, Hartree--Fock theory for double condensates. Phys. Rev. Lett. 78 (1997), 3594--3597.

\bibitem{12} B. D. Esry, Chris H. Greene, Spontaneous spatial symmetry breaking in two-component Bose--Einstein condensates. Phys. Rev. A  59 (1999), 1457--1460.

 \bibitem{15}  D. S. Hall, M. R. Matthews, J. R. Ensher, C. E. Wieman, E. A. Cornell, Dynamics of component separation in a binary mixture of Bose--Einstein
condensates. Phys. Rev. Lett. 81(1998), 1539--1542.

\bibitem{MWY}
M. Musso, J. Wei, S. Yan, Infinitely many positive solutions for a nonlinear field equation with super-critical growth. Proc. Lond. Math. Soc. 112 (2016), 1--26.

\bibitem{29}  B. Noris, H. Tavares, S. Terracini, G. Verzini, Uniform H\"older bounds for nonlinear Schrödinger systems with strong
competition. Commun. Pure Appl. Math. 63 (2010), 267--302.

 \bibitem{23}A. S. Parkins, D. F. Walls, The Physics of trapped dilute-gas Bose--Einstein condensates. Phys. Rep. 303 (1998), 1--80.
 
\bibitem{PW}
S. Peng, Z.-Q. Wang, Segregated and Synchronized Vector Solutions for Nonlinear Schr\"odinger systems. Arch. Ration. Mech. Anal. 208 (2013), 305--339. 

\bibitem{PWY}
S. Peng, C. Wang, S. Yan, Construction of solutions via local Pohozaev identities. J. Funct. Anal. 274 (2018), 2606-2633. 

 \bibitem{PV}
A. Pistoia, G. Vaira, Segregated solutions for nonlinear Schrödinger systems with weak interspecies forces. https://arxiv.org/abs/2203.01551.

\bibitem{37} H. Tavares, S. Terracini, Sign-changing solutions of competition-diffusion elliptic systems and optimal partition problems.
Ann. Inst. H. Poincaré C Anal. Non Linéaire 29 (2012), 279--300.

\bibitem{38} H. Tavares, S. Terracini, G. Verzini, T. Weth, Existence and nonexistence of entire solutions for non-cooperative cubic
elliptic systems. Commun. Partial Differ. Equ. 36 (2011), 1988--2010.

\bibitem{39} S. Terracini, G. Verzini, Multipulse phase in k-mixtures of Bose-Einstein condensates. Arch. Ration. Mech. Anal. 194
(2009), 717--741.

\bibitem{WY}
J. Wei,  S. Yan, Infinitely many solutions for the prescribed scalar curvature problem on $S^N$. J. Funct. Anal. 258 (2010), 3048--3081.

\bibitem{WW}J. Wei, Y. Wu, Ground states of nonlinear Schr\"odinger systems with mixed couplings. J. Math. Pures Appl. 141 (2020), 50--88.

\end{thebibliography}
\end{document}